\documentclass[12pt,reqno]{amsart}
\usepackage{amsmath, amsfonts, amssymb, amsthm, hyperref,cite}
\usepackage{bm}
\usepackage{mathtools}
\usepackage{mathrsfs}
\allowdisplaybreaks[4]
\def\l{\left}
\def\r{\right}
\def\bg{\bigg}
\def\({\bg(}
\def\){\bg)}
\def\t{\text}
\def\f{\frac}

\def\eq{\equiv}

\def\d{\t{d}}

\def\Z{\mathbb Z}

\def\N{\mathbb N}
\def\Q{\mathbb Q}

\def\<{\langle}
\def\>{\rangle}
\def\1{{\bf 1}}

\theoremstyle{plain}
\newtheorem{theorem}{Theorem}[section]

\newtheorem{lemma}{Lemma}[section]
\newtheorem{corollary}{Corollary}[section]
\theoremstyle{definition}

\newtheorem*{Acks}{Acknowledgments}
\theoremstyle{remark}

\newcommand{\CT}{\operatorname{CT}}

\numberwithin{equation}{section}

\begin{document}
\title[Congruences concerning $T_n(b,c)$]{Congruences concerning generalized central trinomial coefficients via the constant term method}

\author[Chen Wang]{Chen Wang$^{\ast}$}
\address{Department of Applied Mathematics, Nanjing Forestry University, Nanjing 210037, People's Republic of China}
\email{cwang@smail.nju.edu.cn}

\author[Xia-Han Cui]{Xia-Han Cui}
\address{Department of Applied Mathematics, Nanjing Forestry University, Nanjing 210037, People's Republic of China}
\email{xhcui@njfu.edu.cn}

\begin{abstract}
For any $n\in\mathbb{N}=\{0,1,2,\ldots\}$ and $b,c\in\mathbb{C}$, the $n$-th generalized central trinomial coefficient is defined as the constant term in the Laurent expansion of $(b+x+cx^{-1})^n$.  In this paper, utilizing the constant term method and generating functions, we prove several congruences concerning generalized central trinomial coefficients. First, we establish a parametric congruence modulo $p^2$, from which several conjectural congruences of Z.-W. Sun follow as special cases. We also prove a Catalan-type congruence modulo $p^2$ and a congruence involving $\binom{3k}{k}T_{3k}(6,1)$ modulo $p$, thereby confirming another two conjectural congruences of Z.-W. Sun.
\end{abstract}
\thanks{$^{\ast}$Corresponding author}
\subjclass[2020]{05A10, 05A19, 11A07, 11B65}
\keywords{The constant term method, congruence, generalized central trinomial coefficient, binomial coefficient}

\maketitle

\section{Introduction}

For any $n\in\N=\{0,1,2,\ldots\}$ and $b,c\in\mathbb C$, the $n$-th generalized central trinomial coefficient $T_n(b,c)$ (cf. \cite{Noe}) is defined by
$$
T_n(b,c)=[x^n](x^2+bx+c)^n.
$$
Equivalently, it is the constant term in the Laurent expansion of $(b+x+cx^{-1})^n$, namely
\begin{equation}\label{CTdef}
T_n(b,c)=\CT_x (b+x+cx^{-1})^n.
\end{equation}
By the multinomial theorem, we have
$$
T_n(b,c)
=
\sum_{j=0}^{\lfloor n/2\rfloor}
\binom{n}{2j}\binom{2j}{j}b^{n-2j}c^j.
$$
In particular, $T_n(1,1)=T_n$ is the classical central trinomial coefficient, while $T_n(2,1)=\binom{2n}{n}$ is the central binomial coefficient.

The coefficients $T_n(b,c)$ form a natural two-parameter extension of the central trinomial coefficients and have attracted considerable attention in combinatorics and number theory. They are closely related to Legendre polynomials (cf. \cite[p. 38]{Gould}). If $d=b^2-4c\ne0$, then one has
\begin{equation}\label{relation}
T_n(b,c)=(\sqrt d)^n P_n\left(\frac{b}{\sqrt d}\right),
\end{equation}
where $P_n(x)$ denotes the Legendre polynomial
\begin{equation}\label{legendre}
P_n(x)=
\sum_{j=0}^{n}
\binom{n}{j}\binom{n+j}{j}
\left(\frac{x-1}{2}\right)^j.
\end{equation}
This connection explains why congruences involving $T_n(b,c)$ are often related to congruences for Legendre polynomials (see, e.g., \cite{Guo2015,WangXia2021}). The generating function of $P_n(x)$ (cf. \cite[p. 188]{Wilf}) is given by
\begin{equation}\label{generatingfunc}
\sum_{n=0}^{\infty}P_n(x)t^n=\f{1}{\sqrt{1-2xt+t^2}}.
\end{equation}

It is known that sums involving products of binomial coefficients, such as
$$
\sum_{k=0}^{p-1} \frac{\binom{2k}{k}^2}{m^k},
\quad
\sum_{k=0}^{p-1} \frac{\binom{2k}{k}\binom{3k}{k}}{m^k},
\quad
\sum_{k=0}^{p-1} \frac{\binom{2k}{k}\binom{4k}{2k}}{m^k}
$$
frequently possess remarkable congruence properties modulo powers of primes (see, e.g., \cite{Guo2017,Liu2017,Mortenson2003a,Mortenson2003b,RV,SunZH2011,SunZH2020,SunZW2011,SunZW2013}). Since the generalized central trinomial coefficients include the central binomial coefficients as a special case, it is interesting to study congruences for sums involving products of binomial coefficients and $T_n(b,c)$. In 2014, Z.-W. Sun \cite{SunZW2014} initiated a systematic investigation of such congruences and proposed many conjectures concerning sums of the following forms
$$
\sum_{k=0}^{(p-1)/2}
\frac{\binom{2k}{k}T_{k}(b,c)}{m^k},
\quad
\sum_{k=0}^{(p-1)/2}
\frac{\binom{2k}{k}T_{2k}(b,c)}{m^k},
\quad
\sum_{k=0}^{(p-1)/2}
\frac{\binom{2k}{k}T_{2k}(b,c)}{(k+1)m^k},
\quad 
\sum_{k=0}^{(p-1)/2}
\frac{\binom{3k}{k}T_{3k}(b,c)}{m^k},
$$
where $b,c$ take special values.

Our first result is the following parametric congruence.

\begin{theorem}\label{mainthm1}
Let $p>3$ be a prime and let $b,c\in\mathbb Z$ with $p\nmid b+2c$. Then
\[
\sum_{k=0}^{(p-1)/2}
\frac{\binom{2k}{k}}{4^k(b+2c)^{2k}}
T_{2k}(b,c^2)
\equiv
(-\lambda)^{(p-1)/2}
+(-1)^{(p-1)/2} p
\sum_{\substack{0\leq j\leq p-1\\ j\ne {(p-1)/2}}}
\frac{\lambda^j}{2j+1}
\pmod{p^2},
\]
where
\[
\lambda=\frac{2c}{b+2c}.
\]
\end{theorem}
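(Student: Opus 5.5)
The plan is to turn the sum into a constant term, replace the weights $\binom{2k}{k}/4^k$ by a truncated binomial expansion with a harmonic-type correction of order $p$, and evaluate each piece as a polynomial in $\lambda$.

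\emph{Constant term set-up.} By \eqref{CTdef}, $T_{2k}(b,c^2)=\CT_y (b+y+c^2y^{-1})^{2k}$. Put
\[
w=\frac{b+y+c^2y^{-1}}{b+2c}=1+\frac{(y-c)^2}{(b+2c)y}.
\]
This normalisation is natural because $b+y+c^2/y=(b+2c)+(y-2c+c^2/y)$. The left-hand side becomes
\[
\CT_y\sum_{k=0}^{(p-1)/2}\frac{\binom{2k}{k}}{4^k}\,w^{2k}.
\]
Write $n=(p-1)/2$. For $0\le k\le n$ we have
\[
\binom{n}{k}(-1)^k=\frac{\binom{2k}{k}}{4^k}\prod_{j=1}^{k}\Bigl(1-\frac{p}{2j-1}\Bigr).
\]
Hence, modulo $p^2$,
\[
\frac{\binom{2k}{k}}{4^k}\equiv(-1)^k\binom{n}{k}\bigl(1+pH'_k\bigr),\qquad H'_k=\sum_{j=1}^{k}\frac{1}{2j-1}.
\]
The sum therefore splits as $S_0+pS_1$, where
\[
S_0=\CT_y(1-w^2)^{n},\qquad S_1=\CT_y\sum_{k\le n}(-1)^k\binom{n}{k}H'_k\,w^{2k}.
\]
Since $S_1$ carries a factor $p$, it only needs to be computed modulo $p$. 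This step is routine.

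\emph{Main term.} Factor $1-w^2=(1-w)(1+w)$. Here
\[
1-w=-\frac{(y-c)^2}{(b+2c)y},\qquad 1+w=2+\frac{(y-c)^2}{(b+2c)y}=\frac{(y+c)^2-(1-\lambda)\cdot 2\cdot\frac{2c}{\lambda}\,y\cdot\frac{\lambda}{2c}(b+2c)\,\cdots}{(b+2c)y}.
\]
More simply, $(b+2c)y(1+w)=y^2+2by+c^2+\dots$ is a quadratic in $y$ whose coefficients are governed by $\lambda=2c/(b+2c)$. The constant term $S_0=\CT_y\bigl[-(y-c)^2\bigl((y-c)^2+2(b+2c)y\bigr)\bigr]^n/\bigl((b+2c)^{2n}y^{2n}\bigr)$ is then extracted by the binomial theorem. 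I expect this to give an exact finite expression in $\lambda$ of the shape $(-1)^n\sum_j\binom{n}{j}\binom{?}{?}\lambda^j$.

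I would then reduce that expression modulo $p^2$ to $(-\lambda)^n$ plus $p$ times a sum. This uses Lucas-type expansions such as $\binom{n+j}{2j}$ and $\binom{2j}{j}$ for $j$ near $n$, together with the congruence $\binom{n}{j}\equiv\binom{-1/2}{j}(1-pH'_j)$ modulo $p^2$.

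\emph{Harmonic term.} Modulo $p$, use $\binom{n}{k}(-1)^k\equiv\binom{2k}{k}/4^k$ to rewrite $S_1$ as
\[
S_1\equiv\CT_y\sum_{k\le n}\frac{\binom{2k}{k}}{4^k}H'_k\,w^{2k}\pmod p.
\]
After expanding $w^{2k}$ and taking constant terms, this should be a combination of $\lambda^j$ with harmonic coefficients. The identity that collapses it is of the form
\[
\sum_k\binom{n}{k}(-1)^kH'_k\,x^k=\text{(explicit sum of }x^j/(2j-1))\cdot(1-x)^n,
\]
which follows from $H'_k=\int_0^1\frac{1-t^{2k}}{1-t^2}\,dt$ or from the standard identity $\sum_k\binom{n}{k}(-1)^kH_k x^k=-\sum_{j=1}^{n}\frac{(1-x)^{n-j}x^j}{j}$ adapted to odd denominators.

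\emph{Main obstacle.} The hardest step is combining the $p$-parts from $S_0$ and $S_1$ into exactly $(-1)^np\sum_{j\ne n}\lambda^j/(2j+1)$. Note that $j$ ranges up to $p-1$, and $j=n$ is excluded because $2n+1=p$. Fermat-type reductions are needed here, such as $1/(2j+1)$ for $n<j\le p-1$ corresponding to the tail $k>n$. If the direct route gets tangled, I would instead use the generating function \eqref{generatingfunc} and relation \eqref{relation}. These give $\sum_k T_{2k}(b,c^2)t^{2k}$ in closed form, and I would pair it with the truncated series $(1-4x)^{-1/2}$ and compare coefficients. Both routes should reduce to the same polynomial identity in $\lambda$, which I would verify for small $p$ first.
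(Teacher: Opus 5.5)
Your splitting into $S_0+pS_1$ is correct, but the proposal stops where the real work begins. In fact $S_0$ can be computed outright. (Your displayed formula for $1+w$ is garbled; in fact $(b+2c)y(1+w)=y^2+2(b+c)y+c^2$.) In the variable of Lemma \ref{symmetric_form} one has $w=1-\frac{\lambda}{2}s$ with $s=2-x-x^{-1}$ and $\CT_x s^r=\binom{2r}{r}$. Hence $1-w^2=\lambda s-\frac{\lambda^2}{4}s^2$ and
\[
S_0=\sum_{i=0}^{n}\binom{n}{i}\Bigl(-\frac14\Bigr)^{i}\binom{2n+2i}{n+i}\lambda^{n+i}.
\]
For $i\ge1$ these terms are $\equiv(-1)^np\lambda^{n+i}/(2i)\pmod{p^2}$. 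This already matches the target terms with $j=n+i>n$. The $i=0$ term is $\binom{p-1}{n}\lambda^n\equiv(-1)^n(1-pH_n)\lambda^n$, where $H_n=\sum_{i\le n}1/i$.

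So everything that remains is the evaluation of $S_1$ modulo $p$. Put $\Sigma_j=\sum_{k\le n}\frac{\binom{2k}{k}}{4^k}H'_k\binom{2k}{j}$, so that $S_1\equiv\sum_j(-\frac12)^j\binom{2j}{j}\Sigma_j\lambda^j\pmod p$. You must show, coefficientwise in $\lambda$,
\[
\Bigl(-\frac12\Bigr)^{j}\binom{2j}{j}\Sigma_j\equiv\frac{(-1)^n}{2j+1}\quad(0\le j<n),\qquad\Bigl(-\frac12\Bigr)^{n}\binom{2n}{n}\Sigma_n\equiv(-1)^nH_n\pmod p.
\]
The second congruence is what cancels the $-(-1)^npH_n\lambda^n$ coming from $S_0$. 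For $j>n$ the coefficient vanishes automatically, since $p\mid\binom{2j}{j}$.

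This is a nontrivial family of harmonic double-sum congruences, and nothing in the proposal establishes it. You call the combination the ``main obstacle'' and leave it open. The ``collapsing identity'' you quote is not of the stated form: its left side equals $-\frac{2^{2n-1}}{n\binom{2n}{n}}\neq0$ at $x=1$, so it is not $(1-x)^n$ times a polynomial. The fallback via \eqref{generatingfunc} is not developed into an argument. So even after filling in $S_0$, you would have only the congruence modulo $p$ plus the $j>n$ part of the correction term.

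The paper never meets $S_1$. Instead of $\frac{\binom{2k}{k}}{4^k}=(-1)^k\binom nk\prod_{j\le k}(1-\frac{p}{2j-1})^{-1}$, it uses \eqref{binomialcon}, i.e.\ $\binom{2k}{k}/4^k\equiv\binom{n+k}{2k}(-4)^k\pmod{p^2}$. The exact error factor there is $\prod_{j\le k}(1-\frac{p^2}{(2j-1)^2})$, so there is no first-order term and the sum is $\equiv\CT_xQ_n(2w)$ directly. This constant term is evaluated exactly (Lemmas \ref{gfunc}--\ref{CTxQ}) as $(-1)^n\sum_{k=0}^{p-1}P_k(\frac{b-2c}{b+2c})$. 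Chu's identity turns that into $\sum_j\binom{p+j}{2j+1}\binom{2j}{j}(-\lambda)^j$, and $\binom{2j}{j}\binom{p+j}{2j+1}=\frac{p}{2j+1}\prod_{r=1}^{j}\frac{p^2-r^2}{r^2}$ produces the main term ($j=n$) and all the terms $p\lambda^j/(2j+1)$ at once. If you want to keep your constant-term framework, use that substitution in place of $(-1)^k\binom nk(1+pH'_k)$. Then $S_1$ disappears, and what remains is an exact constant-term identity.
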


Taking suitable special values of $b$ and $c$, we obtain the following results which confirm some conjectures of Sun \cite[Conjecture 2.1]{SunZW2014}. 

\begin{corollary}\label{mainthm1cor}
For any prime $p>3$, we have
\begin{align}
\label{mainthm1coreq1}\sum_{k=0}^{(p-1)/2}\frac{\binom{2k}{k}}{16^k}T_{2k}(4,1)&\eq1\pmod{p^2},\\
\label{mainthm1coreq2}\sum_{k=0}^{(p-1)/2}\frac{\binom{2k}{k}}{4^k}T_{2k}(3,4)&\eq\l(\frac{-1}{p}\r)\f{7-3^p}{4}\pmod{p^2},\\
\label{mainthm1coreq3}\sum_{k=0}^{(p-1)/2}\frac{\binom{2k}{k}}{16^k}T_{2k}(8,9)&\eq\l(\f{3}{p}\r)\pmod{p^2},
\end{align}
where $(-)$ stands for the Legendre symbol.
\end{corollary}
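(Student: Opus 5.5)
The plan is to derive each congruence from Theorem \ref{mainthm1} by a suitable choice of $(b,c)$, and then evaluate the sum $p\sum_{j\ne(p-1)/2}\lambda^j/(2j+1)$ modulo $p^2$. For that, it suffices to know $\sum_{j\ne (p-1)/2}\lambda^j/(2j+1)$ modulo $p$.

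\emph{Choice of parameters.} In each case we need $c^2$ to be the second argument of $T_{2k}$, $b$ the first, and $4^k(b+2c)^{2k}$ equal to the given denominator.
\begin{itemize}
\item For \eqref{mainthm1coreq1} take $b=4$, $c=-1$. Then $b+2c=2$ and $\lambda=-1$.
\item For \eqref{mainthm1coreq2} take $b=3$, $c=-2$. Then $b+2c=-1$ and $\lambda=4$.
\item For \eqref{mainthm1coreq3} take $b=8$, $c=-3$. Then $b+2c=2$ and $\lambda=-3$.
\end{itemize}
In all three cases $p\nmid b+2c$ since $p>3$, so the theorem applies.

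\emph{Case $\lambda=-1$.} The main term is $(-\lambda)^{(p-1)/2}=1$. For the correction term, pair $j$ with $p-1-j$. We have $2(p-1-j)+1=2p-(2j+1)\equiv-(2j+1)\pmod p$ and $(-1)^{p-1-j}=(-1)^j$. Hence the paired terms cancel modulo $p$, the whole sum is $\equiv0\pmod p$, and \eqref{mainthm1coreq1} follows.

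\emph{General device for the other two cases.} Let $y$ satisfy $y^2=\lambda$, working in $\mathbb Z_p[y]$ if necessary. Substitute $m=2j+1$, so $m$ runs over the odd integers in $[1,2p-1]$ with $m\ne p$. Then
\[
\sum_{j\ne(p-1)/2}\frac{\lambda^j}{2j+1}=y^{-1}\sum_{m}\frac{y^m}{m}.
\]
Split this into $m<p$ and $m=p+r$ with $1\le r\le p-1$ and $r$ even. For the second part use $y^{p+r}/(p+r)\equiv y^{p}\,y^r/r\pmod p$. Note that $y^p\equiv\big(\tfrac{\lambda}{p}\big)y\pmod p$ when $y\notin\mathbb Z$, and $y^p\equiv y$ when $y\in\mathbb Z$. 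This reduces everything to the odd and even parts of $\sum_{r=1}^{p-1}x^r/r$ for $x=\pm y$. These are computed by
\[
\sum_{r=1}^{p-1}\frac{x^r}{r}\equiv\frac{1-x^p-(1-x)^p}{p}\pmod p,
\]
which follows from expanding $(1-x)^p$ and using $\frac1p\binom pr\equiv\frac{(-1)^{r-1}}{r}\pmod p$. Taking half-sums and half-differences over $x=y$ and $x=-y$ isolates the odd and even parts.

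\emph{Case $\lambda=4$.} Here $y=2$. The main term is $(-4)^{(p-1)/2}=\big(\tfrac{-1}{p}\big)2^{p-1}$. The sum modulo $p$ becomes a combination of Fermat-quotient-type expressions $(2-2^p)/p$ and $(1+2^p-3^p)/p$. Combining these with $2^{p-1}$ should give $\big(\tfrac{-1}{p}\big)(7-3^p)/4$ modulo $p^2$, which is \eqref{mainthm1coreq2}.

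\emph{Case $\lambda=-3$.} Here $y=\sqrt{-3}$. The main term is $3^{(p-1)/2}$, which is $\big(\tfrac{3}{p}\big)$ times a quantity $\equiv1\pmod p$ but not $\pmod{p^2}$. We expect the correction sum to cancel exactly the $p$-part of $3^{(p-1)/2}$. The relevant values are $(1\mp\sqrt{-3})^p$, which are $(\pm2)^p$ times $p$-th powers of sixth roots of unity; this makes everything explicit in terms of $2^p$ and $3^{(p-1)/2}$.

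\emph{Expected obstacle.} The main difficulty is the bookkeeping in the last two cases. This involves tracking the cases $p\equiv\pm1\pmod 3$ and $p\equiv\pm1\pmod 4$, and checking that all Fermat quotients cancel except those in the stated right-hand sides.
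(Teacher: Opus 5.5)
Your proposal is correct and is essentially the paper's proof. It uses the same specializations $(b,c)=(4,-1),(3,-2),(8,-3)$ of Theorem~\ref{mainthm1} and the same cancellation under $j\leftrightarrow p-1-j$ for $\lambda=-1$. For $\lambda=4$ and $\lambda=-3$ it uses Granville's congruence $\sum_{r=1}^{p-1}x^r/r\equiv(1-x^p-(1-x)^p)/p \pmod p$ at $x=\pm\sqrt{\lambda}$, with $1\mp\sqrt{-3}=-2\omega,-2\omega^2$.

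Carrying out your bookkeeping does give the correction sums $\equiv -q_p(2)-\tfrac34 q_p(3)$ and $\equiv -s \pmod p$, where $(-3)^{(p-1)/2}=\left(\frac{-3}{p}\right)+ps$, exactly as in Lemma~\ref{corsumkey}. Cancelling $-s$ directly against $3^{(p-1)/2}=\left(\frac{-1}{p}\right)(-3)^{(p-1)/2}$ even avoids the paper's detour through $q_p(3)$.
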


We note that Sun \cite[Remark 2.1]{SunZW2014} also observed that for any prime $p>3$,
\begin{gather*}
\sum_{k=0}^{p-1}\f{\binom{2k}{k}}{4^k}T_{2k}(5,4)\eq1\pmod p,\quad \sum_{k=0}^{p-1}\f{\binom{2k}{k}}{4^k}T_{2k}(3,1)\eq\l(\f{2}{p}\r)\pmod p,\\
\sum_{k=0}^{p-1}\f{\binom{2k}{k}}{16^k}T_{2k}(4,9)\eq\l(\f{p}{3}\r)\pmod p,\quad \sum_{k=0}^{p-1}\f{\binom{2k}{k}}{16^k}T_{2k}(8,25)\eq\l(\f{-5}{p}\r)\pmod p.
\end{gather*}
By Theorem \ref{mainthm1} with $(b,c)=(5,-2),\, (3,-1),\, (4,-3),\, (8,-5)$ respectively, we obtain the modulus $p^2$ extensions of the above observations.

Recall that for $n\in\N$, the $n$-th Catalan number $\binom{2n}{n}/(n+1)\in\Z$. The following Catalan-type congruence similar to \eqref{mainthm1coreq1} confirms another conjecture of Sun \cite[Conjecture 2.1]{SunZW2014}.

\begin{theorem}\label{cat-main}
Let $p>3$ be a prime. Then
\[
\sum_{k=0}^{(p-1)/2}\frac{\binom{2k}{k}}{(k+1)16^k}T_{2k}(4,1)\equiv\frac43\left(\l(\frac3p\r)-p\l(\frac{-1}{p}\r)\right)\pmod {p^2}.
\]
\end{theorem}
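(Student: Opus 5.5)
The plan is to run the same constant-term argument that the paper uses for Theorem \ref{mainthm1}, with the weight $1/(k+1)$ absorbed into a Catalan-type closed form. Set $n=(p-1)/2$ and
\[
w=w(x)=\frac{x+4+x^{-1}}{2},
\]
so that by \eqref{CTdef}
\[
\frac{T_{2k}(4,1)}{16^k}=\frac{1}{4^k}\CT_x\,w^{2k},
\qquad
\sum_{k=0}^{n}\frac{\binom{2k}{k}}{(k+1)16^k}T_{2k}(4,1)=\CT_x\sum_{k=0}^{n}\frac{\binom{2k}{k}}{(k+1)4^k}\,w^{2k}.
\]

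\textbf{Step 1: replace the central binomial coefficients.} I would use the standard expansion, valid for $0\le k\le n$:
\[
\frac{\binom{2k}{k}}{(-4)^k}=\binom{n}{k}\prod_{j=1}^{k}\Big(1-\frac{p}{2j-1}\Big)^{-1}\equiv\binom{n}{k}\Big(1+p\sum_{j=1}^{k}\frac{1}{2j-1}\Big)\pmod{p^2}.
\]
This is presumably the same device used for Theorem \ref{mainthm1}. It splits the sum into a main term
\[
\CT_x\sum_{k=0}^{n}\binom{n}{k}\frac{(-w^2)^k}{k+1}
\]
and $p$ times a correction term, which only has to be evaluated modulo $p$.

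\textbf{Step 2: evaluate the main term.} The inner sum has the exact closed form
\[
\sum_{k=0}^{n}\binom{n}{k}\frac{(-w^2)^k}{k+1}=\frac{1-(1-w^2)^{n+1}}{(n+1)w^2}.
\]
This is a polynomial in $w^2$, so no convergence issues arise. Here $n+1=(p+1)/2$ is a $p$-adic unit, and
\[
1-w^2=-\frac{(1+x)^2(x^2+6x+1)}{4x^2}.
\]
So the main term reduces to constant terms of explicit Laurent polynomials of the form $(1+x)^{2m}(x^2+6x+1)^m x^{-2m}$ divided by $w^2$. I would clear the denominator $w^2$ by writing the quotient as the telescoping sum $\sum_{i=0}^{n}(1-w^2)^i$. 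Each $\CT_x(1-w^2)^i$ is a single binomial-type sum, and the resulting sum over $i$ should collapse modulo $p^2$ by the usual Fermat-quotient and harmonic-number manipulations. Taken modulo $p$, I expect this to produce the term $\frac43\left(\frac3p\right)$.

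\textbf{Step 3: the correction term, modulo $p$.} The correction is $p\,\CT_x\sum_k\binom{n}{k}\frac{(-w^2)^k}{k+1}\sum_{j\le k}\frac{1}{2j-1}$. Modulo $p$, I would use $\frac{1}{k+1}\equiv$ an integral over $[0,1]$ of $t^k$ (formally, via $\binom{n}{k}/(k+1)=\binom{n+1}{k+1}/(n+1)$) and interchange the order of the sums over $j$ and $k$. This leaves sums of the shape $\sum_j\frac{\mu^j}{2j+1}$, analogous to the one in Theorem \ref{mainthm1}, evaluated at the special point coming from $b=4$, $c=1$. There $\lambda=\frac{2c}{b+2c}=1/3$, but after the Catalan twist the relevant value should be $\pm1$. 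The resulting sums $\sum_{j\ne n}1/(2j+1)$ are classical: they vanish or reduce modulo $p$, and I expect them to yield $-\frac43p\left(\frac{-1}{p}\right)$.

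\textbf{Main obstacle.} The main difficulty is Step 2 at the $p^2$ level. The constant terms of $(1+x)^{2i}(x^2+6x+1)^i x^{-2i}$ summed over $i\le n$ must be pinned down modulo $p^2$, not just modulo $p$. The approach I would try first is to evaluate $\CT_x(1-w^2)^{n+1}$ directly, since it is the only genuinely $p$-dependent piece. I would write it as $\binom{p+1}{\cdot}$-weighted coefficient sums of $(x^2+6x+1)^{n+1}$ and use $(x^2+6x+1)^{(p+1)/2}$ together with the factorization $x^2+6x+1=(x+3)^2-8$. Modulo $p$, Euler's criterion would then produce the Legendre symbol. The $p^2$ refinement would come from expanding $\binom{p+1}{2r}$ and $\binom{(p+1)/2}{r}$ to first order in $p$. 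If this turns out to be messy, a fallback is to express the whole Catalan sum via $\binom{2k}{k}/(k+1)=\binom{2k}{k}-\binom{2k}{k+1}$ and reduce to \eqref{mainthm1coreq1} plus one auxiliary sum handled the same way.
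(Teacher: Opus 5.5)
There is a genuine gap: the setup is correct, but Steps 2 and 3 state expected values rather than derive them, and the expected values are false.

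\textbf{The predicted split fails.} Step 1 and the closed form in Step 2 are correct. So you do get $S\equiv M+pX\pmod{p^2}$, with $M=\frac{1}{n+1}\operatorname{CT}_x\frac{1-(1-w^2)^{n+1}}{w^2}$ and $X=\operatorname{CT}_x\sum_k\binom{n}{k}\frac{(-w^2)^k}{k+1}\sum_{j\le k}\frac{1}{2j-1}$. For $p=7$, however:
\begin{itemize}
\item $M=-1861/64\equiv 36\pmod{49}$, whereas $\frac43\left(\frac{3}{7}\right)\equiv 15$;
\item $pX\equiv 21$, whereas $-\frac43\cdot 7\left(\frac{-1}{7}\right)\equiv 42$.
\end{itemize}
Only the total $36+21\equiv 15+42\equiv 8$ is right. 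So $M$ has a nontrivial $p$-part, and the correction is not a ``classical'' $\sum_j\mu^j/(2j+1)$ with $\mu=\pm1$. In Theorem \ref{mainthm1} such sums come from Chu's identity applied to $\sum_{k<p}P_k$. The integral trick for $1/(k+1)$ gives no route from the harmonic-weighted sum $\sum_k\frac{\binom{2k}{k}T_{2k}(4,1)}{(k+1)16^k}\sum_{j\le k}\frac{1}{2j-1}$ to that shape.

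The correction term is also self-inflicted. The paper uses $\binom{2k}{k}\equiv\binom{m+k}{2k}(-16)^k\pmod{p^2}$, whose ratio is $\prod_{j\le k}\bigl(1-p^2/(2j-1)^2\bigr)$. That has no first-order error, so no $X$ ever appears.

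\textbf{The missing idea: constant terms after dividing by $w^2$.} Writing $\frac{1-(1-w^2)^{n+1}}{w^2}=\sum_{i\le n}(1-w^2)^i$ collapses nothing; it just re-expands to your original sum. Nor is $\operatorname{CT}_x(1-w^2)^{n+1}$ ``the only $p$-dependent piece'', because $\operatorname{CT}_x$ does not commute with division by $w^2$.

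The paper handles exactly this point. With $v=2w=y+y^{-1}$ it sets $d_j=\operatorname{CT}_x\frac{y^j+y^{-j}}{v^2}$. Via Lemma \ref{CTid} it finds $\sum_j d_jt^j=\frac{t^2(1+t)}{(1+t^2)^2\sqrt{1-6t+t^2}}$, and it reduces everything to $d_{p+1}\pm d_{p-1}$. In your normalization the same device gives $M=\frac{2(-1)^n}{2^{p-1}(p+1)}\sum_{r=0}^{n}(-1)^r\binom{p+1}{r}d_{p+1-2r}$. You would therefore also need $\sum_{r=2}^n\frac{d_{p+1-2r}}{r(r-1)}$ and a $q_p(2)$ term modulo $p$, and these would have to cancel against $X$.

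\textbf{The $p^2$-level input is absent.} Even the paper's leaner reduction needs input your plan never supplies. Via derivative identities, $d_{p+1}-d_{p-1}$ is reduced to $[t^{p-1}]\frac{1-t}{(1+t^2)\sqrt{1-6t+t^2}}=\frac p2[t^p]g\left(\frac{2t}{(1+t)^2}\right)$, where $g'(z)=\frac{1}{(1-z)\sqrt{1-4z}}$. This is then evaluated with the Sun--Tauraso congruences $\sum_{l<p}\binom{2l}{l}\equiv\left(\frac p3\right)\pmod{p^2}$ and $\sum_{k=1}^{(p-1)/2}\frac{\binom{2k}{k}}{k2^k}\equiv q_p(2)\pmod p$.

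Euler's criterion plus first-order expansions of $\binom{p+1}{2r}$ leaves the coefficient of $p$ as an unevaluated sum of exactly this difficulty. Your fallback $\binom{2k}{k}-\binom{2k}{k+1}$ also does not help: the auxiliary sum is just the sum in \eqref{mainthm1coreq1} minus the original one, so the whole problem moves there.
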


Motivated by Theorem \ref{mainthm1}, it is natural to ask whether Theorem \ref{cat-main} admits a parametric generalization. More precisely, we are interested in determining the congruence
\[
\sum_{k=0}^{(p-1)/2}\frac{\binom{2k}{k}}{4^k(k+1)(b+2c)^{2k}}T_{2k}(b,c^2)\pmod{p^2}.
\]
This question appears to be rather difficult, and we leave it for future investigation.

Our third result is a modulus $p$ congruence concerning the $3k$-th generalized central trinomial coefficients, and it was also conjectured by Sun \cite[Conjecture 2.1]{SunZW2014}.

\begin{theorem}\label{triple-main}
Let $p>3$ be a prime. Then
\[
\sum_{k=0}^{p-1}\frac{\binom{3k}{k}}{432^k}T_{3k}(6,1)\equiv 1\pmod p.
\]
\end{theorem}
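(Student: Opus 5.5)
The plan is to write everything as a constant term, use Lucas-type reductions modulo $p$ to turn the binomial sum into a truncated hypergeometric polynomial, and then use a cubic transformation to make the resulting constant term explicit. By \eqref{CTdef},
\[
T_{3k}(6,1)=\CT_x\,(6+x+x^{-1})^{3k}=\CT_x\frac{(x^2+6x+1)^{3k}}{x^{3k}} .
\]
Interchanging the finite sum and $\CT_x$, the left-hand side becomes $\CT_x\, G(z(x))$, where
\[
G(z)=\sum_{k=0}^{p-1}\binom{3k}{k}\Big(\frac{4z}{27}\Big)^k,\qquad z(x)=\frac{27}{4}\cdot\frac{(x^2+6x+1)^3}{432x^3}=\frac{(x^2+6x+1)^3}{64x^3}.
\]
The normalization is chosen because $\binom{3k}{k}(4/27)^k=\frac{(1/3)_k(2/3)_k}{(1/2)_k\,k!}$. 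Hence $G$ is the truncation at $k\le p-1$ of ${}_2F_1\!\left(\tfrac13,\tfrac23;\tfrac12;z\right)$. The key algebraic fact is
\[
1-z=-\frac{(x+1)^2\big((x^2+6x+1)^2+4x(x^2+6x+1)+16x^2\big)}{64x^3},
\]
which follows from $a^3-b^3=(a-b)(a^2+ab+b^2)$ with $a=x^2+6x+1$ and $b=4x$. Thus $z(x)$ is exactly the kind of rational degree-$6$ substitution that appears in cubic transformations of ${}_2F_1(\frac13,\frac23;\frac12;\cdot)$.

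First, I reduce modulo $p$. For $p/3<k<p/2$ one has $3k=p+r$ with $0\le r<k$, so Lucas' theorem gives $p\mid\binom{3k}{k}$. For $2p/3<k\le p-1$, write $3k=2p+r$ with $0\le r<k$; again $\binom{3k}{k}\equiv0\pmod p$. Modulo $p$, the surviving terms are $k\le p/3$ and $p/2\le k<2p/3$. On these ranges I replace the rising factorials $(1/3)_k,(2/3)_k,(1/2)_k$ by $p$-adically equivalent binomials, such as $(1/3)_k/k!\equiv\binom{-1/3}{k}(-1)^k\equiv(-1)^k\binom{\langle -1/3\rangle_p}{k}$. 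The aim is to express $G(z)$ modulo $p$ as a polynomial of degree $<p$ that is congruent to a hypergeometric polynomial. A natural target is a Legendre-type polynomial, for instance via \eqref{legendre}, after the quadratic change $z=\frac{1-t}{2}$-style.

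Second, I apply to this polynomial the cubic transformation ${}_2F_1(\frac13,\frac23;\frac12;z(x))=R(x)\,{}_2F_1(\ldots;w(x))$, with $R$ an algebraic prefactor and $w$ of low degree. I would check it at the level of truncated polynomials modulo $p$, the standard device being that both sides satisfy the same second-order differential equation, which has a one-dimensional polynomial solution space of degree $<p$ over $\mathbb F_p$. After this, $\CT_x$ of the transformed expression is explicit. The contributions from the two surviving blocks $k\le p/3$ and $p/2\le k<2p/3$ should combine so that only the constant $1$ survives.

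An alternative route avoids transformation formulas. Using $\sum_k\binom{3k}{k}y^k=\frac{1}{1-3yw^2}$, where $w=1+yw^3$, I would compute $\CT_x$ as a residue at the small root $x\to0$ of the algebraic function in $x$. I would then show that the truncation error is $\equiv0\pmod p$ by a Dwork-type argument that $G(z)^{\,p}$ and the full series agree modulo $p$ up to a Frobenius twist.

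The main obstacle is the second step. One must pin down the correct cubic transformation that is compatible with $z(x)$ and verify that it holds for the truncated polynomials modulo $p$, not just as formal power series. The difficulty lies in the middle block $p/2\le k<2p/3$, where the truncation interacts nontrivially with the denominator $(1/2)_k$. If this fails, I would fall back on the residue computation, splitting $\CT_x$ according to the factorization of $1-z$ above.
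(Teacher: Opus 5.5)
There is a genuine gap. Your normalization $z(x)=(x^2+6x+1)^3/(64x^3)$ is correct, and so is your Lucas analysis: modulo $p$ only $k<p/3$ and $p/2<k<2p/3$ survive. But the proposal stops exactly where the proof has to happen. You name no transformation, compute no constant term, and the claim that ``only the constant $1$ survives'' is never derived.

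The device you offer for checking a transformation modulo $p$ is also false here. Because $c=\tfrac12$, the recurrence $(k+1)(k+\tfrac12)a_{k+1}=(k+\tfrac13)(k+\tfrac23)a_k$ degenerates over $\mathbb{F}_p$ at $k=(p-1)/2$. Since $a_{(p-1)/2}\equiv0$ there, the coefficient $a_{(p+1)/2}$ is unconstrained. So the polynomial solutions of degree $<p$ form a two-dimensional space, the second one starting at $z^{(p+1)/2}$ (the local exponent $\tfrac12\equiv\tfrac{p+1}{2}$). That second solution is precisely your middle block. Agreement of differential equations therefore cannot determine $G\bmod p$; you would also need its leading coefficient, which comes from a $p/p$ cancellation.

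The fallback does not rescue this. Every $z(x)^k=(6+x+x^{-1})^{3k}/64^k$ contributes to the $x^0$ coefficient, and $z(1)=8$, so $\sum_k\binom{3k}{k}T_{3k}(6,1)/432^k$ diverges (its terms have size roughly $8^k$). There is no untruncated constant term to compute as a residue. Inserting a formal parameter only returns you to evaluating a degree-$(p-1)$ truncation at parameter $1$ modulo $p$, which is the original problem, and the ``Frobenius twist'' step is never specified.

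The missing idea is to drop hypergeometric transformations entirely and make the sum geometric.

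\begin{itemize}
\item Since $432=2\cdot6^3$, $\binom{3k}{k}/2^k=[y^{3k}](1+y^3/2)^{3k}$, and $[y^l](1+y^3/2)^l=0$ for $3\nmid l$. After discarding $k>2p/3$, the sum becomes $\sum_{l=0}^{2p-1}$ of $[y^l](1+y^3/2)^l\,T_l(6,1)/6^l$, each term a constant term of a power of $A=(y^{-1}+\tfrac{y^2}{2})\cdot\tfrac{6+x+x^{-1}}{6}$.
\item For $0\le l<p$, use $(1-A^p)/(1-A)\equiv(1-A)^{p-1}$.
\item For $p\le l<2p$ (this range contains your middle block), Frobenius gives $T_{p+l}(6,1)\equiv6^pT_l(6,1)$ and $(1+y^3/2)^{p+l}\equiv(1+y^{3p}/2^p)(1+y^3/2)^l$. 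This reduces the block to the same expression with the extracted $y$-degree shifted by $p$.
\item Next, $\operatorname{CT}_x(\alpha+\beta(x+x^{-1}))^{p-1}\equiv(\alpha^2-4\beta^2)^{(p-1)/2}$. Here $\alpha=12y-12-6y^3$ and $\beta=-(2+y^3)$, so this equals $16^{(p-1)/2}(y-1)^{p-1}f(y)$ with $f(y)=(y+2)^{(p-1)/2}(2y^3-3y+4)^{(p-1)/2}$.
\item Finally, $(y-1)^{p-1}\equiv1+y+\cdots+y^{p-1}$ turns the extractions $[y^{p-1}]$ and $[y^{2p-1}]$ into complementary partial sums of the coefficients of $f$. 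Their total is $f(1)=3^{p-1}\equiv1\pmod p$.
\end{itemize}

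So your two surviving blocks are combined by Frobenius and an evaluation at $y=1$, not by a transformation formula.
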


Our approach relies on the constant term method and generating functions in combinatorics. For a Laurent series $F(x)$, let $\operatorname{CT}_x F(x)$ denote its constant term, that is, the coefficient of $x^0$. The essence of this method is to realize combinatorial quantities as constant terms of Laurent polynomials or rational functions. This transformation enables us to convert finite sums involving binomial coefficients and generalized central trinomial coefficients into coefficient-extraction problems for one-variable algebraic functions. 

The rest of this paper is organized as follows. We shall prove Theorem \ref{mainthm1} and Corollary \ref{mainthm1cor} in Section \ref{sec:parametric}. Theorem \ref{cat-main} is proved in Section \ref{sec:catalan}. Finally, Section \ref{sec:triple} is devoted to the proof of Theorem \ref{triple-main}.

\section{Proofs of Theorem \ref{mainthm1} and Corollary \ref{mainthm1cor}}\label{sec:parametric}
For brevity, in the subsequent proofs, we always write $m=(p-1)/2$.

We first transform $T_n(b,c^2)$ into a symmetric form.
\begin{lemma}\label{symmetric_form}
For $n\in\N$ and $b,c\in\Z$, we have
$$
T_n(b,c^2)=\CT_x(b+c(x+x^{-1}))^n.
$$
\end{lemma}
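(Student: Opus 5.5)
The plan is to start from the constant term representation \eqref{CTdef} with $c$ replaced by $c^2$, namely
$$T_n(b,c^2)=\CT_x\,(b+x+c^2x^{-1})^n,$$
and change variables $x\mapsto cx$. This substitution is only legitimate when $c\neq 0$, so I would treat $c=0$ first.

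When $c=0$, the claim is immediate. The left side is $T_n(b,0)=[x^n](x^2+bx)^n=b^n$. The right side is $\CT_x b^n=b^n$.

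Now let $c\neq0$ and put $F(x)=(b+x+c^2x^{-1})^n$, a Laurent polynomial. For any nonzero scalar $c$, the map $x\mapsto cx$ sends a monomial $x^k$ to $c^kx^k$. This fixes the coefficient of $x^0$, so $\CT_x F(cx)=\CT_x F(x)$. Evaluating,
$$F(cx)=(b+cx+c^2(cx)^{-1})^n=(b+cx+cx^{-1})^n=(b+c(x+x^{-1}))^n.$$
Together with \eqref{CTdef}, this gives the asserted identity.

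As an independent check, or as an alternative proof that avoids the case split, one can expand directly with the binomial theorem. This gives
$$\CT_x(b+c(x+x^{-1}))^n=\sum_{i}\binom{n}{i}b^{n-i}c^i\,\CT_x(x+x^{-1})^i.$$
Here $\CT_x(x+x^{-1})^i$ equals $\binom{2j}{j}$ when $i=2j$ and vanishes when $i$ is odd. The sum therefore reduces to the multinomial formula for $T_n(b,c^2)$ quoted in the introduction. I do not expect any real obstacle; the only point needing care is the degenerate case $c=0$, which the substitution argument cannot handle.
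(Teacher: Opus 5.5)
Your proof is correct and is the same as the paper's: you treat $c=0$ directly, and for $c\neq0$ you substitute $x\mapsto cx$ in \eqref{CTdef}, which preserves the constant term and turns $b+x+c^2x^{-1}$ into $b+c(x+x^{-1})$. Your alternative check, expanding with the binomial theorem and using $\CT_x(x+x^{-1})^{2j}=\binom{2j}{j}$, is also valid and has the small advantage of avoiding the case split.
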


\begin{proof}
From the definition of $T_n(b,c^2)$, we have
$$
T_n(b,c^2)=\CT_y(b+y+c^2y^{-1})^n.
$$
The lemma is clear when $c=0$. If $c\neq0$, then putting $y=cx$ and noting that $\CT_y=\CT_x$ we obtain
$$
T_n(b,c^2)=\CT_x(b+cx+cx^{-1})^n,
$$
as desired.
\end{proof}

The following two lemmas on generating functions play a crucial role in the proof of Lemma \ref{CTxQ}.

For $n\in\N$, define
\[
Q_n(z)=\sum_{k=0}^n(-1)^k\binom{n+k}{2k}z^{2k}.
\]

\begin{lemma}\label{gfunc}
We have
\[
\sum_{n=0}^\infty Q_n(z)t^n
=\frac{1-t}{(1-t)^2+tz^2}.
\]
\end{lemma}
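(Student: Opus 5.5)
We want $\sum_{n\ge0} Q_n(z)t^n=\frac{1-t}{(1-t)^2+tz^2}$, where $Q_n(z)=\sum_{k=0}^n(-1)^k\binom{n+k}{2k}z^{2k}$. I will interchange the order of summation and sum over $n$ first for each fixed $k$. Writing $n=k+j$ with $j\ge 0$, we have $\binom{n+k}{2k}=\binom{2k+j}{2k}$. The standard negative binomial expansion gives
\[
\sum_{j\ge0}\binom{2k+j}{2k}t^{j}=\frac{1}{(1-t)^{2k+1}},
\]
and therefore
\[
\sum_{n\ge k}\binom{n+k}{2k}t^n=\frac{t^k}{(1-t)^{2k+1}}.
\]

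Substituting this back, the double sum becomes
\[
\sum_{n=0}^\infty Q_n(z)t^n=\sum_{k\ge0}(-1)^kz^{2k}\frac{t^k}{(1-t)^{2k+1}}
=\frac{1}{1-t}\sum_{k\ge0}\left(\frac{-tz^2}{(1-t)^2}\right)^k .
\]
This is a geometric series, whose sum is
\[
\frac{1}{1-t}\cdot\frac{(1-t)^2}{(1-t)^2+tz^2}=\frac{1-t}{(1-t)^2+tz^2},
\]
as claimed.

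The only point needing care is the interchange of summation, and it is harmless. As formal power series in $t$ with coefficients polynomial in $z$, each term $t^k/(1-t)^{2k+1}$ has $t$-adic valuation $k$. So the family is summable, and the rearrangement and the geometric series identity are legitimate in $\mathbb{Z}[z][[t]]$.

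There is no real obstacle beyond the bookkeeping of the shift $n=k+j$.
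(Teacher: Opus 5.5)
Your proposal is correct and follows the paper's proof: the same reindexing $n=k+j$, the negative binomial expansion $\sum_{j\ge0}\binom{2k+j}{2k}t^j=(1-t)^{-(2k+1)}$, and summing the resulting geometric series. Your remark on summability in $\mathbb{Z}[z][[t]]$ justifies the interchange of summation, which the paper leaves implicit.
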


\begin{proof}
Clearly,
\begin{align*}
\sum_{n=0}^\infty Q_n(z)t^n
&=\sum_{n=0}^\infty\sum_{k=0}^n
(-1)^k\binom{n+k}{2k}z^{2k}t^n  \\
&=\sum_{k=0}^\infty (-1)^k z^{2k}t^k
\sum_{r=0}^\infty \binom{r+2k}{2k}t^r,
\end{align*}
where $n=k+r$. Since
\[
\sum_{r=0}^\infty\binom{r+2k}{2k}t^r=\frac{1}{(1-t)^{2k+1}},
\]
we get
\[
\sum_{n=0}^\infty Q_n(z)t^n
=\frac{1}{1-t}
\sum_{k=0}^\infty
\left(-\frac{z^2t}{(1-t)^2}\right)^k
=\frac{1-t}{(1-t)^2+tz^2}.
\]
This concludes the proof.
\end{proof}

\begin{lemma}\label{CTid}
Let $\alpha(t)$ and $\beta(t)$ be polynomials in $t$ with coefficients over $\Q[i]$ such that $\alpha(0)\neq0$ and $\beta(0)=0$. Then
\[
\operatorname{CT}_x\frac{1}{\alpha(t)+\beta(t)(x+x^{-1})}
=
\frac{1}{\sqrt{\alpha(t)^2-4\beta(t)^2}},
\]
where the square root is the formal square root whose constant term
equals $\alpha(0)$.
\end{lemma}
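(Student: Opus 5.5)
We will prove Lemma \ref{CTid} by expanding $1/(\alpha+\beta(x+x^{-1}))$ as a power series in $t$ whose coefficients are Laurent polynomials in $x$, and then comparing with the known expansion of the inverse square root. This is legitimate because $\alpha(0)\neq0$ and $\beta(0)=0$. Write $u=x+x^{-1}$ and $\gamma=\beta/\alpha$. Here $\gamma$ is a formal power series in $t$ with $\gamma(0)=0$; it lies in $\Q[i][[t]]$ since $\alpha$ is invertible in that ring. Then
\[
\frac{1}{\alpha+\beta u}=\frac{1}{\alpha}\sum_{n=0}^\infty(-1)^n\gamma^n u^n .
\]
Because $\gamma^n$ has $t$-adic order at least $n$, this series converges in $\Q[i][x,x^{-1}][[t]]$. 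Taking the constant term in $x$ coefficientwise in $t$ commutes with the summation. Since $\CT_x u^n$ equals $\binom{n}{n/2}$ for even $n$ and $0$ for odd $n$, we get
\[
\CT_x\frac{1}{\alpha+\beta u}=\frac{1}{\alpha}\sum_{k=0}^\infty\binom{2k}{k}\gamma^{2k}.
\]

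Next we use the classical identity $\sum_{k\ge0}\binom{2k}{k}y^k=(1-4y)^{-1/2}$ as formal power series in $y$. Substitute $y=\gamma^2$, which is allowed because $\gamma^2$ has zero constant term. This gives
\[
\frac{1}{\alpha}(1-4\gamma^2)^{-1/2}.
\]
Here $(1-4\gamma^2)^{-1/2}$ is the unique formal power series with constant term $1$ whose square is $(1-4\gamma^2)^{-1}$.

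Finally we identify this with $1/\sqrt{\alpha^2-4\beta^2}$. Let $S=\alpha(1-4\gamma^2)^{1/2}$. Then $S^2=\alpha^2-4\beta^2$ and $S(0)=\alpha(0)$. In characteristic zero, a power series with nonzero constant term has exactly two square roots $\pm S$. So $S$ is the square root normalized as in the statement, and the expression above equals $1/S$. All these formal manipulations take place in the ring $\Q[i][[t]]$.

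The only delicate point is justifying the interchange of $\CT_x$ with the infinite sum, together with the substitution into the binomial series. Both follow from $t$-adic convergence, because each coefficient of $t^N$ receives contributions from only finitely many $n$.
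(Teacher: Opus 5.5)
Your proposal is correct and follows essentially the same route as the paper: expand $1/(\alpha+\beta(x+x^{-1}))$ geometrically in $\beta/\alpha$, use $\CT_x(x+x^{-1})^{k}=\binom{k}{k/2}$ for even $k$ (and $0$ for odd $k$), and sum via $\sum_{k}\binom{2k}{k}y^k=(1-4y)^{-1/2}$. Your remarks on $t$-adic convergence and on the uniqueness of the normalized square root make explicit what the paper leaves implicit.
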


\begin{proof}
Since $\alpha(0)\neq0$, $\alpha(t)$ is invertible. Therefore,
\begin{align*}
\CT_x\frac{1}{\alpha(t)+\beta(t)(x+x^{-1})}&=\CT_x\f{1}{\alpha(t)}\f{1}{1+\f{\beta(t)}{\alpha(t)}(x+x^{-1})}\\
&=\CT_x\sum_{k=0}^{\infty}(-1)^k\f{\beta(t)^k}{\alpha(t)^{k+1}}(x+x^{-1})^k.
\end{align*}
Note that
\begin{equation}\label{centralbinomial}
\CT_x(x+x^{-1})^k=\begin{cases}0,\quad& 2\nmid k,\\ \binom{k}{k/2},\quad& 2\mid k.\end{cases}
\end{equation}
So we have
$$
\CT_x\frac{1}{\alpha(t)+\beta(t)(x+x^{-1})}=\sum_{k=0}^{\infty}\f{\beta(t)^{2k}}{\alpha(t)^{2k+1}}\binom{2k}{k}=\frac{1}{\sqrt{\alpha(t)^2-4\beta(t)^2}}.
$$
This completes the proof.
\end{proof}

The following lemma establishes a relation between a certain constant term and a sum involving Legendre polynomials.

\begin{lemma}\label{CTxQ}
Let $b,c\in\Z$ with $b+2c\neq0$. Then, for $n\in\N$, we have
\[
\CT_xQ_n(u)=(-1)^n\sum_{k=0}^{2n}P_k\l(\f{b-2c}{b+2c}\r),
\]
where 
$$u=\f{2(b+c(x+x^{-1}))}{b+2c}.$$
\end{lemma}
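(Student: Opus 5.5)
The plan is to compare generating functions in $t$. Set $s=\frac{b-2c}{b+2c}$. It suffices to show
\[
\sum_{n=0}^\infty \CT_x Q_n(u)\,t^n=\sum_{n=0}^\infty (-1)^n t^n\sum_{k=0}^{2n}P_k(s)
\]
as formal power series in $t$. The constant term is taken coefficientwise in $t$, and each coefficient is a Laurent polynomial in $x$, so the interchange is legitimate.

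For the left side I apply Lemma \ref{gfunc} with $z=u$:
\[
\sum_n Q_n(u)t^n=\frac{1-t}{(1-t)^2+tu^2}.
\]
Here $u^2$ involves $(x+x^{-1})^2$, so Lemma \ref{CTid} does not apply directly. I would factor $(1-t)^2+tu^2=(1-t+i\sqrt t\,u)(1-t-i\sqrt t\,u)$ and use partial fractions:
\[
\frac{1-t}{(1-t)^2+tu^2}=\frac12\left(\frac1{1-t+i\sqrt t\,u}+\frac1{1-t-i\sqrt t\,u}\right).
\]
Both terms are linear in $x+x^{-1}$. To stay in power series, I substitute $t=w^2$, which gives $\alpha=1-w^2\pm\frac{2ibw}{b+2c}$ and $\beta=\pm\frac{2icw}{b+2c}$. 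These are polynomials over $\Q[i]$ (after clearing the rational factor $1/(b+2c)$), with $\alpha(0)=1$ and $\beta(0)=0$. Lemma \ref{CTid} then gives
\[
\frac12\left(\frac1{\sqrt{\alpha_+^2-4\beta_+^2}}+\frac1{\sqrt{\alpha_-^2-4\beta_-^2}}\right).
\]
Writing $\kappa=\frac{2w}{b+2c}$, we have $\alpha_\pm^2-4\beta_\pm^2=(1-w^2)^2\pm 2ib\kappa(1-w^2)-b^2\kappa^2+4c^2\kappa^2$.

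For the right side, the generating function \eqref{generatingfunc} gives
\[
\sum_n (-1)^n t^n\sum_{k\le 2n}P_k(s).
\]
With $t=w^2$ this is the even part, in $w$, of the series
\[
\sum_N \left(\sum_{k\le N}P_k(s)\right)(iw)^N=\frac{1}{1-iw}\cdot\frac{1}{\sqrt{1-2isw-w^2}}.
\]
The even part of a series $F(w)$ is $\frac12\bigl(F(w)+F(-w)\bigr)$. So it remains to match the two sums term by term. Concretely, I would verify
\[
(1-iw)^2(1-2isw-w^2)=(1-w^2)^2-2ib\kappa(1-w^2)-(b^2-4c^2)\kappa^2
\]
(or the same identity with the sign $+$), and then check that the square-root branches agree at $w=0$. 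Expanding the left side gives $(1-2iw-w^2)(1-2isw-w^2)$, which is
\[
(1-w^2)^2-2i(1+s)w(1-w^2)-4sw^2.
\]
Since $1+s=\frac{2b}{b+2c}$, we get $2i(1+s)w=2ib\kappa$. Since $4s=\frac{4(b^2-4c^2)}{(b+2c)^2}$, we get $4sw^2=(b^2-4c^2)\kappa^2$. So the identity holds.

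The main obstacle is bookkeeping rather than ideas. One must pick the correct pairing of the $\pm$ signs, confirm that the formal square roots on both sides are the branches with constant term $1$, and justify the substitution $t=w^2$ together with taking the even part. Once the algebraic identity above is in hand, comparing coefficients of $t^n$ yields the lemma.
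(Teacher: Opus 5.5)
Your proposal is correct and is essentially the paper's proof: the paper likewise works with $t^{2n}$ (your substitution $t=w^2$), splits $\frac{1-t^2}{(1-t^2)^2+t^2u^2}$ into $\frac12\bigl(\frac{1}{1-t^2+itu}+\frac{1}{1-t^2-itu}\bigr)$, and evaluates each piece by Lemma \ref{CTid} as $\frac{1}{(1\pm it)\sqrt{1\pm 2ist-t^2}}$ with $s=\frac{b-2c}{b+2c}$. It then identifies the average with the even part of $\frac{1}{(1-it)\sqrt{1-2ist-t^2}}$ from \eqref{generatingfunc}, and your factorization $(1-iw)^2(1-2isw-w^2)=\alpha_-^2-4\beta_-^2$ is exactly the radicand simplification the paper does in one line (no clearing of $1/(b+2c)$ is needed, since those coefficients already lie in $\Q[i]$).
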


\begin{proof}
By Lemma \ref{gfunc}, we have
\[
\sum_{n=0}^\infty
\CT_x Q_n(u)t^{2n}
=
\CT_x
\frac{1-t^2}{(1-t^2)^2+t^2u^2}.
\]
It is easy to see that
\[
\frac{1-t^2}{(1-t^2)^2+t^2u^2}
=\frac12\left(
\frac{1}{1-t^2+itu}
+
\frac{1}{1-t^2-itu}
\right).
\]
Note that
$$
1-t^2+itu=\l(1-t^2+\f{2ibt}{b+2c}\r)+\f{2ict}{b+2c}(x+x^{-1}).
$$
With the help of Lemma \ref{CTid}, we obtain
\begin{align*}
\CT_x\frac{1}{1-t^2+itu}&=\f{1}{\sqrt{\l(1-t^2+\f{2ibt}{b+2c}\r)^2+\f{16c^2t^2}{(b+2c)^2}}}\\
&=\f{1}{\sqrt{(1-t^2)^2-\f{4b^2t^2}{(b+2c)^2}+\f{4ibt(1-t^2)}{b+2c}+\f{16c^2t^2}{(b+2c)^2}}}\\
&=\f{1}{(1+it)\sqrt{1+\f{b-2c}{b+2c}2it-t^2}}.
\end{align*}
Similarly, 
$$
\CT_x\frac{1}{1-t^2-itu}=\f{1}{(1-it)\sqrt{1-\f{b-2c}{b+2c}2it-t^2}}.
$$
Combining the above, we arrive at
\begin{align}\label{CTxQkey1}
\sum_{n=0}^\infty
\CT_x Q_n(u)t^{2n}&=\f12\l(\CT_x\frac{1}{1-t^2+itu}+\CT_x\frac{1}{1-t^2-itu}\r)\notag\\
&=\f12\l(\f{1}{(1+it)\sqrt{1+\f{b-2c}{b+2c}2it-t^2}}+\f{1}{(1-it)\sqrt{1-\f{b-2c}{b+2c}2it-t^2}}\r).
\end{align}

On the other hand, by \eqref{generatingfunc} we have
\begin{align}\label{CTxQkey2}
&\sum_{n=0}^{\infty}(-1)^n\sum_{k=0}^{2n}P_k\l(\f{b-2c}{b+2c}\r)t^{2n}\notag\\
&\qquad=\f12\sum_{n=0}^{\infty}\sum_{k=0}^{n}P_k\l(\f{b-2c}{b+2c}\r)(-it)^n+\f12\sum_{n=0}^{\infty}\sum_{k=0}^{n}P_k\l(\f{b-2c}{b+2c}\r)(it)^n\notag\\
&\qquad=\f12\sum_{k=0}^{\infty}P_k\l(\f{b-2c}{b+2c}\r)(-it)^k\sum_{n=k}^{\infty}(-it)^{n-k}+\f12\sum_{k=0}^{\infty}P_k\l(\f{b-2c}{b+2c}\r)(it)^k\sum_{n=k}^{\infty}(it)^{n-k}\notag\\
&\qquad=\f12\l(\f{1}{(1+it)\sqrt{1+\f{b-2c}{b+2c}2it-t^2}}+\f{1}{(1-it)\sqrt{1-\f{b-2c}{b+2c}2it-t^2}}\r).
\end{align}
Combining \eqref{CTxQkey1} and \eqref{CTxQkey2}, we get
$$
\sum_{n=0}^\infty
\CT_x Q_n(u)t^{2n}=\sum_{n=0}^{\infty}(-1)^n\sum_{k=0}^{2n}P_k\l(\f{b-2c}{b+2c}\r)t^{2n}.
$$
Finally, comparing coefficients of $t^{2n}$ proves the desired lemma. 
\end{proof}

\begin{lemma}\label{legendrecon}
Under the conditions of Theorem \ref{mainthm1}, we have
\[
\sum_{k=0}^{p-1}P_k\l(\f{b-2c}{b+2c}\r)\eq \lambda^{(p-1)/2}
+ p
\sum_{\substack{0\leq j\leq p-1\\ j\ne {(p-1)/2}}}
\frac{\lambda^j}{2j+1}
\pmod{p^2},
\]
where $\lambda$ is defined as in Theorem \ref{mainthm1}.
\end{lemma}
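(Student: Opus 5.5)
The plan is to expand each $P_k$ by \eqref{legendre}, swap the order of summation, and evaluate the inner sum over $k$ in closed form. That leaves a single sum over $j$ whose terms can be read off modulo $p^2$.

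\textbf{Step 1 (expand and swap).} Put $x=\frac{b-2c}{b+2c}$. Then
$$\frac{x-1}{2}=\frac{-2c}{b+2c}=-\lambda.$$
Since $p\nmid b+2c$, $\lambda$ is a $p$-adic integer. By \eqref{legendre},
$$P_k(x)=\sum_{j=0}^k\binom{k}{j}\binom{k+j}{j}(-\lambda)^j=\sum_{j=0}^k\binom{k+j}{2j}\binom{2j}{j}(-\lambda)^j .$$
Summing over $0\le k\le p-1$ and interchanging the sums gives
$$\sum_{k=0}^{p-1}P_k(x)=\sum_{j=0}^{p-1}\binom{2j}{j}(-\lambda)^j\sum_{k=j}^{p-1}\binom{k+j}{2j}.$$

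\textbf{Step 2 (close the inner sum).} By the hockey-stick identity,
$$\sum_{k=j}^{p-1}\binom{k+j}{2j}=\sum_{r=2j}^{p+j-1}\binom{r}{2j}=\binom{p+j}{2j+1}.$$
So the sum becomes $\sum_{j=0}^{p-1}(-\lambda)^j\binom{2j}{j}\binom{p+j}{2j+1}$. Next I simplify the coefficient:
$$\binom{2j}{j}\binom{p+j}{2j+1}=\frac{(p+j)!}{(2j+1)\,j!^2\,(p-j-1)!}=\frac{p}{2j+1}\prod_{i=1}^{j}\frac{p^2-i^2}{i^2}=\frac{p\,(-1)^j}{2j+1}\prod_{i=1}^{j}\Big(1-\frac{p^2}{i^2}\Big).$$
Here I used $(p+j)!/(p-j-1)!=p\prod_{i=1}^j(p^2-i^2)$. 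For $j\le p-1$, every $i\le j$ is prime to $p$, so the product is $\equiv1\pmod{p^2}$.

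\textbf{Step 3 (read off modulo $p^2$).} I split according to whether $2j+1=p$.

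If $j\ne m$, then $2j+1$ is prime to $p$, since $1\le 2j+1\le 2p-1$ and $2j+1\ne p$. Hence the term is
$$\equiv \frac{p(-1)^j}{2j+1}(-\lambda)^j=\frac{p\lambda^j}{2j+1}\pmod{p^2}.$$

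If $j=m$, then $2j+1=p$ cancels the factor $p$ exactly. The term equals $(-1)^m(-\lambda)^m\prod_{i=1}^m(1-p^2/i^2)$, which is $\equiv\lambda^m\pmod{p^2}$.

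Adding these contributions gives exactly
$$\lambda^{(p-1)/2}+p\sum_{j\ne m}\frac{\lambda^j}{2j+1}\pmod{p^2}.$$

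The only delicate point is this $j=m$ term. There the factor $p$ cancels, so one must check that the remaining product is $1$ modulo $p^2$ and not merely modulo $p$. This holds because each factor differs from $1$ by a multiple of $p^2$. The rest is routine bookkeeping of $p$-integrality, which is guaranteed by $p\nmid b+2c$.
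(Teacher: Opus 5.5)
Your proposal is correct and follows the same route as the paper. Both expand via \eqref{legendre}, interchange the sums, close the inner sum with the hockey-stick (Chu) identity to get $\binom{p+j}{2j+1}$, and reduce $\binom{2j}{j}\binom{p+j}{2j+1}=\frac{p}{2j+1}\prod_{r=1}^{j}\frac{p^2-r^2}{r^2}$ modulo $p^2$; your separate treatment of the $j=(p-1)/2$ term just makes explicit what the paper's congruence $\equiv\frac{p(-1)^j}{2j+1}\pmod{p^2}$ covers implicitly.
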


\begin{proof}
By \eqref{legendre},
\begin{align*}
\sum_{k=0}^{p-1}P_k\l(\f{b-2c}{b+2c}\r)&=\sum_{k=0}^{p-1}\sum_{j=0}^k\binom{k}{j}\binom{k+j}{j}(-\lambda)^j\\
&=\sum_{j=0}^{p-1}\binom{2j}{j}(-\lambda)^j\sum_{k=j}^{p-1}\binom{k+j}{2j}\\
&=\sum_{j=0}^{p-1}\binom{p+j}{2j+1}\binom{2j}{j}(-\lambda)^j,
\end{align*}
where in the last step we have used Chu's identity. For $0\leq j\leq p-1$, one has
\[
\binom{2j}{j}\binom{p+j}{2j+1}
=
\frac{p}{2j+1}
\prod_{r=1}^{j}\frac{p^2-r^2}{r^2}\eq\f{p(-1)^j}{2j+1}\pmod{p^2}.
\]
Combining the above, we conclude the proof.
\end{proof}

\medskip

\noindent{\it Proof of Theorem \ref{mainthm1}}. By \cite[Lemma 3.1]{SunZW2011}, for $0\leq k\leq m$, we have
\begin{equation}\label{binomialcon}
\binom{2k}{k}\eq\binom{m+k}{2k}(-16)^k\pmod{p^2}.
\end{equation}
Therefore, in view of Lemmas \ref{symmetric_form} and \ref{CTxQ}, we have
\begin{align*}
\sum_{k=0}^{m}\frac{\binom{2k}{k}}{4^k(b+2c)^{2k}}T_{2k}(b,c^2)&\eq\CT_x\sum_{k=0}^{m}\f{\binom{m+k}{2k}(-4)^k}{(b+2c)^{2k}}(b+c(x+x^{-1}))^{2k}\\
&=\CT_xQ_m\l(\f{2(b+c(x+x^{-1}))}{b+2c}\r)\\
&=(-1)^m\sum_{k=0}^{p-1}P_k\l(\f{b-2c}{b+2c}\r)\pmod{p^2}.
\end{align*}
Then the desired result follows immediately from Lemma \ref{legendrecon}.\qed

\medskip

Taking $(b,c)=(4,-1),\, (3,-2),\,(8,-3)$ in Theorem \ref{mainthm1}, we obtain
\begin{align}
\label{corkey1}\sum_{k=0}^{(p-1)/2}\frac{\binom{2k}{k}}{16^k}T_{2k}(4,1)&\eq1+(-1)^{(p-1)/2} p
\sum_{\substack{0\leq j\leq p-1\\ j\ne {(p-1)/2}}}
\frac{(-1)^j}{2j+1}\pmod{p^2},\\
\label{corkey2}\sum_{k=0}^{(p-1)/2}\frac{\binom{2k}{k}}{4^k}T_{2k}(3,4)&\eq(-4)^{(p-1)/2}+(-1)^{(p-1)/2} p
\sum_{\substack{0\leq j\leq p-1\\ j\ne {(p-1)/2}}}
\frac{4^j}{2j+1}\pmod{p^2},\\
\label{corkey3}\sum_{k=0}^{(p-1)/2}\frac{\binom{2k}{k}}{16^k}T_{2k}(8,9)&\eq3^{(p-1)/2}+(-1)^{(p-1)/2} p
\sum_{\substack{0\leq j\leq p-1\\ j\ne {(p-1)/2}}}
\frac{(-3)^j}{2j+1}\pmod{p^2}.
\end{align}
To prove Corollary \ref{mainthm1cor}, we need the following lemma.

\begin{lemma}\label{corsumkey}
For any prime $p>3$, we have
\begin{align}
\label{harmonic1}\sum_{\substack{0\leq j\leq p-1\\ j\ne {(p-1)/2}}}
\frac{(-1)^j}{2j+1}&\eq0\pmod{p},\\
\label{harmonic2}\sum_{\substack{0\leq j\leq p-1\\ j\ne {(p-1)/2}}}
\frac{4^j}{2j+1}&\eq -q_p(2)-\frac34 q_p(3)\pmod{p},\\
\label{harmonic3}\sum_{\substack{0\leq j\leq p-1\\ j\ne {(p-1)/2}}}
\frac{(-3)^j}{2j+1}&\eq-\frac12\left(\frac{-3}{p}\right)q_p(3)\pmod{p},
\end{align}
where $q_p(a)=(a^{p-1}-1)/p$ stands for the Fermat quotient with respect to the $p$-adic unit $a$.
\end{lemma}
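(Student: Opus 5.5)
The plan is to reduce all three congruences to sums of the form
\[
S(\lambda)=\sum_{j=0,\,j\neq m}^{p-1}\frac{\lambda^j}{2j+1}
\]
over the field $\mathbb F_p$. We substitute $i=2j+1$, which runs over the odd integers $1,3,\ldots,2p-1$ with $i\ne p$. Then $\lambda^j=\lambda^{(i-1)/2}$, and writing $\lambda=\mu^2$ we get $S(\lambda)=\mu^{-1}\sum_{i\ \mathrm{odd},\,i\neq p}\mu^i/i$.

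For \eqref{harmonic1} ($\lambda=-1$) no square root is needed. The map $j\mapsto p-1-j$ is an involution of $\{0,\ldots,p-1\}\setminus\{m\}$ with $2(p-1-j)+1=2p-(2j+1)\equiv-(2j+1)\pmod p$ and $(-1)^{p-1-j}=(-1)^j$. So the terms cancel in pairs modulo $p$, and the sum is $\equiv0$.

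For \eqref{harmonic2} and \eqref{harmonic3} we use the same pairing. It gives $j'=p-1-j$ with $\lambda^{j'}=\lambda^{p-1}\lambda^{-j}\equiv\lambda^{-j}$, so
\[
S(\lambda)\equiv\sum_{j=0}^{m-1}\frac{\lambda^j-\lambda^{-j}}{2j+1}\pmod p .
\]
Only the indices $2j+1<p$ appear, which is cleaner. Next we reduce to sums $\sum_{k=1}^{p-1}x^k/k$ modulo $p$, using the classical fact $\sum_{k=1}^{p-1}x^k/k\equiv\frac{1-x^p-(1-x)^p}{p}\pmod p$ (Glaisher/Granville). Splitting into odd and even $k$, the odd-$k$ part $\sum_{k\ \mathrm{odd}<p}x^k/k$ equals $\frac12\bigl(F(x)-F(-x)\bigr)$ with $F(x)=\sum_{k=1}^{p-1}x^k/k$. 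Taking $x=\mu$ with $\mu^2=\lambda$ (or $\mu^2=\lambda^{-1}$) turns $\sum_{j<m}\lambda^j/(2j+1)$ into $\mu^{-1}\cdot\frac12(F(\mu)-F(-\mu))$, which is expressible through Fermat quotients.

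In each case we pick $\mu$ as follows.
\begin{itemize}
\item For $\lambda=4$: $\mu=2$ and $\lambda^{-1}$ corresponds to $\mu=1/2$. Then $F(\pm2)$ and $F(\pm1/2)$ give $\frac{1-2^p+1}{p}$, $\frac{1+2^p-3^p}{p}$, etc. These combine into $q_p(2)$ and $q_p(3)$, matching the target $-q_p(2)-\frac34q_p(3)$.
\item For $\lambda=-3$: we need $\mu^2=-3$, so $\mu$ lies in $\mathbb Z_p[\omega]$, with $\mu=1+2\omega=\sqrt{-3}$. The expressions $1\pm\mu$ are $2(-\omega^2)$ and $-2\omega$-type elements, whose $p$-th powers are computed via $\omega^p=\omega^{(\frac{p}{3})}$. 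Since $\mu^{p-1}=(-3)^{m}\equiv\left(\frac{-3}{p}\right)$, this is the source of the Legendre symbol. The $q_p(2)$ terms should cancel, leaving $-\frac12\left(\frac{-3}{p}\right)q_p(3)$.
\end{itemize}
Alternatively, one could work modulo $p$ in $\mathbb F_{p^2}$ if $-3$ is a nonresidue.

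The main obstacle will be the bookkeeping in \eqref{harmonic3}. There one must handle the quadratic extension, the congruence $(1+x)^p$ evaluated at algebraic $x$ modulo $p^2$ in $\mathbb Z_p[\omega]$, and the case split $p\equiv\pm1\pmod 3$, and then check that everything collapses to the stated form. I would first verify the final formulas numerically, for example with $p=5,7$, to fix signs.
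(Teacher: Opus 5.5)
Your proposal is correct, and its ingredients are the paper's. Both use the pairing $j\leftrightarrow p-1-j$; the paper's reindexing $j=m\mp j'$ in \eqref{generaltrans} is the same pairing. Both use Granville's congruence $F(x)=\sum_{k=1}^{p-1}x^k/k\equiv(1-x^p-(1-x)^p)/p\pmod p$, the square root $\sqrt{-3}=1+2\omega$ in $\mathbb Z_p[\omega]$ with a case split on $p$ modulo $3$, and Fermat quotients.

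You differ only in the bookkeeping after the pairing. The paper writes the paired sum as $\frac{t^m}{2}\sum_{j=1}^{m}(t^j-t^{-j})/j$. It then needs a second reflection, \eqref{keykey}, to express this through $F(u)$ and $F(-u)$ alone, with coefficients $1\pm u^{-p}$. You keep the odd denominators $2j+1<p$ and extract odd parts directly, which gives
\[
S(\lambda)\equiv\frac12\Bigl[\mu^{-1}\bigl(F(\mu)-F(-\mu)\bigr)-\mu\bigl(F(\mu^{-1})-F(-\mu^{-1})\bigr)\Bigr]\pmod p,\qquad \mu^2=\lambda .
\]
This avoids the second reflection at the price of four Granville evaluations instead of two. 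For $\lambda=4$ the four values combine correctly to $-q_p(2)-\frac34q_p(3)$.

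For $\lambda=-3$ nothing new appears. Since $1-\mu^{-1}=2\omega/\mu$ and $1+\mu^{-1}=-2\omega^2/\mu$, the term $\frac{\mu}{2}\bigl(F(\mu^{-1})-F(-\mu^{-1})\bigr)$ equals $\mu^{1-p}q_p(2)\equiv\left(\frac{-3}{p}\right)q_p(2)$. The term $\frac{1}{2\mu}\bigl(F(\mu)-F(-\mu)\bigr)$ is $\left(\frac{-3}{p}\right)q_p(2)-s$, where $(-3)^{(p-1)/2}=\left(\frac{-3}{p}\right)+ps$. Hence $S(-3)\equiv -s$, exactly the paper's intermediate result, and the $q_p(2)$ cancellation you predicted does occur.

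Two small points. First, the step you only gesture at, turning $s$ into $q_p(3)$, needs more than $\mu^{p-1}\equiv\left(\frac{-3}{p}\right)\pmod p$. You must square $\left(\frac{-3}{p}\right)+ps$ and compare with $3^{p-1}=1+pq_p(3)$ modulo $p^2$ to get $s\equiv\frac12\left(\frac{-3}{p}\right)q_p(3)$, as the paper does in \eqref{vcon}. Second, the suggested alternative of working modulo $p$ in $\mathbb F_{p^2}$ would not suffice if taken literally. Granville's formula divides by $p$, so it needs $1-x^p-(1-x)^p$ modulo $p^2$; you must work in $\mathbb Z_p[\omega]$ modulo $p^2$, as your main line does.
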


\begin{proof}
For any $t\in\Z$ with $p\nmid t$, we have
\begin{align}\label{generaltrans}
\sum_{\substack{0\leq j\leq p-1\\ j\ne {(p-1)/2}}}
\frac{t^j}{2j+1}&=\sum_{j=0}^{(p-3)/2}\f{t^j}{2j+1}+\sum_{j=(p+1)/2}^{p-1}\f{t^j}{2j+1}\notag\\
&=t^{(p-1)/2}\sum_{j=1}^{(p-1)/2}\f{t^{-j}}{2((p-1)/2-j)+1}+t^{(p-1)/2}\sum_{j=1}^{(p-1)/2}\f{t^j}{2((p-1)/2+j)+1}\notag\\
&\eq \f{t^{(p-1)/2}}{2}\sum_{j=1}^{(p-1)/2}\l(\f{t^j}{j}-\f{t^{-j}}{j}\r)\pmod{p}.
\end{align}
Putting $t=-1$ in \eqref{generaltrans}, we obtain \eqref{harmonic1} at once.

Write $t=u^2$. If $t$ is not a square, the following congruences are understood in $\mathbb Z_p[u]$. Note that
\begin{align*}
\sum_{j=1}^{p-1}\f{u^j}{j}&=\sum_{j=1}^{(p-1)/2}\f{u^{2j}}{2j}+\sum_{j=0}^{(p-3)/2}\f{u^{2j+1}}{2j+1}\\
&=\f{1}{2}\sum_{j=1}^{(p-1)/2}\f{t^j}{j}+\sum_{j=1}^{(p-1)/2}\f{u^{2((p-1)/2-j)+1}}{2((p-1)/2-j)+1}\\
&\eq\f12\sum_{j=1}^{(p-1)/2}\f{t^j}{j}-\f{u^p}{2}\sum_{j=1}^{(p-1)/2}\f{t^{-j}}{j}\pmod{p}.
\end{align*}
Similarly,
$$
\sum_{j=1}^{p-1}\f{(-u)^j}{j}\eq \f12\sum_{j=1}^{(p-1)/2}\f{t^{j}}{j}+\f{u^{p}}{2}\sum_{j=1}^{(p-1)/2}\f{t^{-j}}{j}\pmod{p}.
$$
Therefore,
\begin{equation}\label{keykey}
\sum_{j=1}^{(p-1)/2}\l(\f{t^j}{j}-\f{t^{-j}}{j}\r)\eq(u^{-p}+1)\sum_{j=1}^{p-1}\f{u^j}{j}+(1-u^{-p})\sum_{j=1}^{p-1}\f{(-u)^{j}}{j}\pmod{p}.
\end{equation}
In 2004, Granville \cite{Granville} showed that
\begin{equation}\label{Granvilleeq}
\sum_{j=1}^{p-1}\f{z^j}{j}\eq\f{1-z^p-(1-z)^p}{p}\pmod{p}.
\end{equation}

Now we prove \eqref{harmonic2}. Since $t=4=2^2$, in view of \eqref{keykey} and \eqref{Granvilleeq} we obtain
\begin{align*}
\f{2^{p-1}}{2}\sum_{j=1}^{(p-1)/2}\l(\f{4^j}{j}-\f{4^{-j}}{j}\r)&\eq\f12\l(\f32\f{1-2^p-(-1)^p}{p}+\f12\f{1+2^p-3^p}{p}\r)\\
&=-q_p(2)-\f{3}{4}q_p(3)\pmod{p}.
\end{align*}
This, together with \eqref{generaltrans}, gives \eqref{harmonic2}.

Below we consider \eqref{harmonic3}. In this case, $t=-3=(\sqrt{-3})^2=(2\omega+1)^2$, where $\omega$ is a primitive cubic root of unit. Using \eqref{keykey} and \eqref{Granvilleeq} we have
\begin{align}\label{sqrt-3}
&\f{(-3)^{(p-1)/2}}{2}\sum_{j=1}^{(p-1)/2}\l(\f{(-3)^j}{j}-\f{(-3)^{-j}}{j}\r)\notag\\
&\qquad\eq\f{(-3)^{(p-1)/2}}{2}((\sqrt{-3})^{-p}+1)\f{1-(\sqrt{-3})^p-(-2\omega)^p}{p}\notag\\
&\qquad\quad+\f{(-3)^{(p-1)/2}}{2}(1-(\sqrt{-3})^{-p})\f{1+(\sqrt{-3})^p-(-2\omega^2)^p}{p}\pmod{p}.
\end{align}
Note that modulo $p$,
$$
(-3)^{(p-1)/2}\eq\l(\f{-3}{p}\r)=\begin{cases}1,\quad&p\eq1\pmod3,\\ -1,\quad&p\eq2\pmod3.\end{cases}
$$
Thus we can write 
$$
(-3)^{(p-1)/2}=\l(\f{-3}{p}\r)+ps,
$$
where $s$ is a $p$-adic integer. If $p\eq1\pmod3$, then the right-hand side of \eqref{sqrt-3} equals
\begin{align*}
&\f{(-3)^{(p-1)/2}}{2}\f{2+2^p\omega+2^p\omega^2}{p}+\f{(-3)^{(p-1)/2}}{2}(\sqrt{-3})^{-p}\f{-2(\sqrt{-3})^{p}+2^p\omega-2^p\omega^2}{p}\\
&\qquad\eq\f{1-(-3)^{(p-1)/2}}{p}=-s\pmod{p}.
\end{align*}
Also, if $p\eq2\pmod3$, the right-hand side of \eqref{sqrt-3} equals $-s$. Thus, it remains to show 
\begin{equation}\label{vcon}
s\eq\f12\l(\f{-3}{p}\r)q_p(3)\pmod{p}.    
\end{equation}
In fact,
\begin{align*}
1+pq_p(3)=(-3)^{p-1}=\l(\l(\f{-3}{p}\r)+ps\r)^2\eq 1+2p\l(\f{-3}{p}\r)s\pmod{p^2}.
\end{align*}
This means
$$
q_p(3)\eq2\l(\f{-3}{p}\r)s\pmod{p},
$$
which is equivalent to \eqref{vcon}. This proves \eqref{harmonic3}.

The proof of Lemma \ref{corsumkey} is now complete.
\end{proof}

\medskip

\noindent{\it Proof of Corollary \ref{mainthm1cor}}. Substituting \eqref{harmonic1} into \eqref{corkey1}, we obtain \eqref{mainthm1coreq1}.

Combining \eqref{corkey2} and \eqref{harmonic2}, we have
$$
\sum_{k=0}^{(p-1)/2}\frac{\binom{2k}{k}}{4^k}T_{2k}(3,4)\eq\l(\f{-1}{p}\r)\l(1-\f{3}{4}pq_p(3)\r)=\l(\f{-1}{p}\r)\f{7-3^p}{4}\pmod{p^2},
$$
which proves \eqref{mainthm1coreq2}.

By \eqref{corkey3} and \eqref{harmonic3}, we deduce that
$$
\sum_{k=0}^{(p-1)/2}\frac{\binom{2k}{k}}{16^k}T_{2k}(8,9)\eq3^{(p-1)/2}-\f{p}{2}\l(\f{3}{p}\r)q_p(3)\pmod{p^2}.
$$
In view of \eqref{vcon},
$$
3^{(p-1)/2}=\l(\f{-1}{p}\r)(-3)^{(p-1)/2}\eq \l(\f{3}{p}\r)+\f{p}{2}\l(\f{3}{p}\r)q_p(3)\pmod{p^2}.
$$
Combining the above two congruences, we arrive at \eqref{mainthm1coreq3}.

The proof of Corollary \ref{mainthm1cor} is now complete.\qed

\section{Proof of Theorem \ref{cat-main}}\label{sec:catalan}

Throughout this section, write $v=4+x+x^{-1}$. For $n\in \N$, define
$$
R_n(z)=\sum_{k=0}^n\f{(-1)^k}{k+1}\binom{n+k}{2k}z^{2k}.
$$

We need a usable closed form for $R_n(z)$.

\begin{lemma}\label{Rn(z)}
For $n\in\Z^+$, we have
$$
R_n(z)=\f{1}{z^2}\l(\f{2-V_{n+1}(z^2)}{n+1}-\f{2-V_n(z^2)}{n}\r),
$$
where $V_k(z^2)=\alpha^k+\beta^k$ and $\alpha,\beta$ are the two roots of 
$$
t^2-(2-z^2)t+1=0.
$$
\end{lemma}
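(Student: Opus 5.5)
Since $V_k(z^2)=\alpha^k+\beta^k$ with $\alpha+\beta=2-z^2$ and $\alpha\beta=1$, I would use the standard expansion of Lucas-type sequences as polynomials in $z^2$:
\[
V_k(z^2)=\sum_{j=0}^{k}(-1)^j\frac{k}{k+j}\binom{k+j}{2j}z^{2j}\qquad(k\ge1).
\]
This is the well-known formula $2T_k((2-z^2)/2)$ for Chebyshev polynomials. To verify it, it suffices to check the recurrence $V_{k+1}=(2-z^2)V_k-V_{k-1}$ together with the initial values $V_0=2$ and $V_1=2-z^2$. Alternatively, one can compare generating functions: $\sum_k V_k t^k=\frac{2-(2-z^2)t}{1-(2-z^2)t+t^2}$, and $1-(2-z^2)t+t^2=(1-t)^2+tz^2$ is exactly the denominator appearing in Lemma \ref{gfunc}. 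So the expansion can be derived in the same way as Lemma \ref{gfunc}, by expanding in powers of $tz^2/(1-t)^2$.

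From the expansion, the $j=0$ term is $1$, so
\[
\frac{2-V_k(z^2)}{k}=\frac{1}{k}\Big(1-\sum_{j\ge1}(-1)^j\tfrac{k}{k+j}\tbinom{k+j}{2j}z^{2j}\Big)+\frac1k,
\]
since $2-V_k=2-1-\sum_{j\ge1}\cdots$. More cleanly, the constant term of $2-V_k$ is $0$, and
\[
\frac{2-V_k(z^2)}{k}=-\sum_{j=1}^k\frac{(-1)^j}{k+j}\binom{k+j}{2j}z^{2j}.
\]
I would then compute the difference term by term:
\[
\frac{2-V_{n+1}}{n+1}-\frac{2-V_n}{n}=-\sum_{j\ge1}(-1)^j z^{2j}\left(\frac{1}{n+1+j}\binom{n+1+j}{2j}-\frac{1}{n+j}\binom{n+j}{2j}\right).
\]

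The key binomial step is the identity
\[
\frac{1}{n+1+j}\binom{n+1+j}{2j}-\frac{1}{n+j}\binom{n+j}{2j}=\frac1j\binom{n+j}{2j-1}\cdot\frac{1}{2j}\cdot\ldots
\]
I would not rely on guessing its form; I would simply compute it directly. Write $\frac1{N}\binom{N}{2j}=\frac{(N-1)!}{(2j)!(N-2j)!}$. With $N=n+1+j$ and $N=n+j$, the difference equals
\[
\frac{(n+j-1)!}{(2j)!(n-j+1)!}\big((n+j)-(n-j+1)\big)=\frac{(n+j-1)!}{(2j-1)!\,(n-j+1)!}\cdot\frac{2j-1}{2j}\cdot\frac{1}{1}.
\]
This should simplify to $\frac{1}{j}\binom{n+j-1}{2j-2}$, and I will check the factorials carefully at this point. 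Re-indexing with $j=k+1$ then gives $-(-1)^{k+1}z^{2k+2}\frac{1}{k+1}\binom{n+k}{2k}$. Dividing by $z^2$ yields $\sum_{k=0}^{n-1}\frac{(-1)^k}{k+1}\binom{n+k}{2k}z^{2k}$. The top term $k=n$ comes from $j=n+1$, where only $V_{n+1}$ contributes, and it should match.

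The main obstacle is getting this binomial difference and the boundary terms exactly right, in particular the term $j=n+1$. If the termwise identity proves messy, I will instead verify it through generating functions. Multiplying $R_n$-type sums by $t^n$ and using the integral $\int_0^1 s^k\,ds=\frac{1}{k+1}$ applied to $z^2\mapsto sz^2$ in Lemma \ref{gfunc}, the identity $\int_0^1 Q_n(\sqrt{s}z)\,ds=R_n(z)$ reduces the lemma to the statement
\[
\int_0^1 Q_n(\sqrt s z)\,ds=\frac{1}{z^2}\Big(\frac{2-V_{n+1}}{n+1}-\frac{2-V_n}{n}\Big).
\]
This can be checked by differentiating with respect to $z^2$, using $\frac{d}{dw}V_k(w)$ expressed through $\frac{\alpha^k-\beta^k}{\alpha-\beta}$ and the identity $Q_n=\frac{\alpha^{n+1}-\beta^{n+1}-(\alpha^n-\beta^n)}{\alpha-\beta}$, which follows from Lemma \ref{gfunc} by partial fractions.
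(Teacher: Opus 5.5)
Your first route (explicit expansion of $V_k$ and termwise coefficient comparison) differs from the paper's. It works, but as written it contains two factor-of-$2$ errors that happen to cancel.

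\emph{First error.} The expansion you state is that of $T_k(1-z^2/2)=V_k/2$, not of $V_k$. Already at $k=1$ it gives $1-z^2/2$ instead of $2-z^2$, so the initial-value check you propose would fail. The correct formula, which your generating-function derivation actually produces (use $2-(2-z^2)t=2(1-t)+tz^2$ and $\binom{k+j-1}{2j-1}=\frac{2j}{k+j}\binom{k+j}{2j}$), is
\[
V_k(z^2)=\sum_{j=0}^{k}(-1)^j\frac{2k}{k+j}\binom{k+j}{2j}z^{2j}\qquad(k\ge1).
\]
Hence $\frac{2-V_k}{k}=-2\sum_{j\ge1}\frac{(-1)^j}{k+j}\binom{k+j}{2j}z^{2j}$.

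\emph{Second error.} Your factorial computation is right up to its last display, but it simplifies to
\[
\frac{1}{n+1+j}\binom{n+1+j}{2j}-\frac{1}{n+j}\binom{n+j}{2j}=\frac{(2j-1)\,(n+j-1)!}{(2j)!\,(n-j+1)!}=\frac{1}{2j}\binom{n+j-1}{2j-2},
\]
not $\frac1j\binom{n+j-1}{2j-2}$.

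With both corrections the factor $2$ cancels legitimately, and the re-indexing $j=k+1$ gives exactly $z^2R_n(z)$. The top term $j=n+1$ is consistent with the general formula: there $\binom{2n+1}{2n+2}=0$ and the difference equals $\frac{1}{2(n+1)}$. To let your factorial manipulation cover $j=n+1$, write the second term as $\frac{(n+j-1)!\,(n-j+1)}{(2j)!\,(n-j+1)!}$.

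Your fallback route is correct as sketched, provided you multiply by $z^2$ before differentiating. It uses:
\begin{itemize}
\item $z^2R_n(z)=\int_0^{z^2}Q_n(\sqrt u)\,du$;
\item $\frac{d}{dw}V_k(w)=-k\,\frac{\alpha^k-\beta^k}{\alpha-\beta}$;
\item $Q_n=\frac{(\alpha^{n+1}-\beta^{n+1})-(\alpha^n-\beta^n)}{\alpha-\beta}$, obtained from Lemma \ref{gfunc} by partial fractions;
\item both sides vanish at $z^2=0$, since $V_k(0)=2$.
\end{itemize}

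By comparison, the paper sums $\sum_n R_n(z)t^n$ directly. The weight $\frac1{k+1}$ turns the geometric series into $\frac{1-t}{z^2t}\log\bigl(1+\frac{z^2t}{(1-t)^2}\bigr)$. The factorization $1+\frac{z^2t}{(1-t)^2}=\frac{(1-\alpha t)(1-\beta t)}{(1-t)^2}$ then yields $\sum_{k\ge1}\frac{2-V_k}{k}t^k$ at once, and the factor $1-t$ explains the "difference of consecutive terms" shape. So the paper needs neither an explicit formula for $V_k$ nor any binomial identity.

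Your termwise route is fully elementary and makes the coefficient match visible. The cost is that it needs the Chebyshev-type expansion, which must itself be proved, and careful bookkeeping, which is exactly where the slips occurred. Your integral route is essentially the paper's idea, with the integration carried out in $z^2$ rather than by summing the logarithmic series.
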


\begin{proof}
It is easy to see that
\begin{align*}
\sum_{n=0}^{\infty}R_n(z)t^n
&=\sum_{n=0}^{\infty}\sum_{k=0}^n\f{(-1)^k}{k+1}\binom{n+k}{2k}z^{2k}t^n \\
&=\sum_{k=0}^{\infty}\f{(-1)^kz^{2k}}{k+1}\sum_{n=k}^{\infty}\binom{n+k}{2k}t^n\\
&=\f{1}{1-t}\sum_{k=0}^{\infty}\f{1}{k+1}\l(-\f{z^2t}{(1-t)^2}\r)^k\\
&=\frac{1-t}{z^2t}\log\left(1+\frac{z^2t}{(1-t)^2}\right).
\end{align*}
Now
\[
1+\frac{z^2t}{(1-t)^2}=\frac{(1-\alpha t)(1-\beta t)}{(1-t)^2},
\]
because $\alpha+\beta=2-z^2$ and $\alpha\beta=1$. Hence
\[
\log\left(1+\frac{z^2t}{(1-t)^2}\right)=\sum_{k=1}^{\infty}\frac{2-V_k(z^2)}{k}t^k.
\]
Multiplying by $(1-t)/(z^2t)$ and comparing the coefficient of $t^n$ gives the desired formula.
\end{proof}

Now we introduce an auxiliary quantity $y$ and write $v=y+y^{-1}$. Define 
$$
d_n=\CT_x\f{y^n+y^{-n}}{v^2}\quad(n\in\N).
$$ 
Since $y^n+y^{-n}$ is a polynomial in $v$, the above definition is independent of the choice of $y$. 

The following result gives a simplification of $\CT_xR_n(v)$.

\begin{lemma}\label{CTxR}
For $n\in\Z^+$, we have
$$
\CT_xR_n(v)=(-1)^n\l(\f{d_{2n+2}}{n+1}+\f{d_{2n}}{n}\r).
$$
\end{lemma}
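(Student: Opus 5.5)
The plan is to start from the closed form of Lemma \ref{Rn(z)} with $z=v$, rewrite everything in the auxiliary variable $y$ with $v=y+y^{-1}$, and then apply $\CT_x$ term by term.

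\emph{Roots in terms of $y$.} Since $v=y+y^{-1}$, we have $v^2=y^2+2+y^{-2}$, so $2-v^2=-(y^2+y^{-2})$. The quadratic $t^2-(2-v^2)t+1=0$ therefore has roots $\alpha=-y^2$ and $\beta=-y^{-2}$. Hence, for every $k$,
$$
V_k(v^2)=(-1)^k\l(y^{2k}+y^{-2k}\r).
$$

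\emph{Substitution into Lemma \ref{Rn(z)}.} This gives
$$
R_n(v)=\f{1}{v^2}\l(\f{2+(-1)^{n}(y^{2n+2}+y^{-2n-2})}{n+1}-\f{2-(-1)^n(y^{2n}+y^{-2n})}{n}\r),
$$
which can be regrouped as
$$
R_n(v)=(-1)^n\l(\f{1}{n+1}\cdot\f{y^{2n+2}+y^{-2n-2}}{v^2}+\f1n\cdot\f{y^{2n}+y^{-2n}}{v^2}\r)+\l(\f{2}{n+1}-\f2n\r)\f{1}{v^2}.
$$
Each quotient $(y^{k}+y^{-k})/v^2$ is a rational function of $x$ that does not depend on the choice of $y$, because $y^k+y^{-k}$ is a polynomial in $v$.

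\emph{Fixing the meaning of $\CT_x$.} The left side $R_n(v)$ is a genuine Laurent polynomial in $x$, whereas the individual summands on the right are only rational functions. I will therefore fix one linear interpretation of $\CT_x$ that extends the usual one and gives the definition of $d_n$ its meaning. The natural choice is the Laurent expansion in the annulus containing $|x|=1$, which is legitimate because $v=x^{-1}(x^2+4x+1)$ has no zeros on the unit circle. On Laurent polynomials this agrees with the ordinary constant term. Applying it term by term and using linearity yields
$$
\CT_xR_n(v)=(-1)^n\l(\f{d_{2n+2}}{n+1}+\f{d_{2n}}{n}\r)+\l(\f1{n+1}-\f1n\r)d_0,
$$
since $\CT_x(2/v^2)=d_0$.

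\emph{Main obstacle: the $d_0$ term.} It remains to get rid of $\l(\f1{n+1}-\f1n\r)d_0$, and I expect this to be the delicate point. In the unit-circle interpretation, a residue computation at $x=-2+\sqrt3$ suggests $d_0\ne0$. So the statement can hold only in one of two ways. Either the paper intends a convention (for example, a specific formal expansion of $1/v^2$) under which $\CT_x v^{-2}=0$, or the constant pieces must be absorbed into the $d$'s by a regrouping. The regrouping I would try is
$$
\f{2-V_k(v^2)}{v^2}=\f{2-(-1)^k(y^{2k}+y^{-2k})}{v^2}.
$$
For odd $k$ the numerator is $(y^k+y^{-k})^2$; for even $k$ it is $-(y^k-y^{-k})^2$, which is divisible by $v^2$ when $k$ is odd, but $k$ even here. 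So I would first check whether a modified definition of $d_n$ (e.g.\ one including the constant $\mp2$) is what is meant. I would test the case $n=1$ directly: there $R_1(v)=1-v^2/2$, so $\CT_xR_1(v)=1-\tfrac12\CT_x v^2=-10$. I would compare this with the right side to pin down the convention, and then check that the chosen convention is compatible with the later use of the lemma in the proof of Theorem \ref{cat-main}.
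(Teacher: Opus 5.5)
Your substitution step is correct and is exactly the paper's. With $\alpha=-y^2$ and $\beta=-y^{-2}$ you reach
\[
R_n(v)=(-1)^n\left(\frac{y^{2n+2}+y^{-2n-2}}{(n+1)v^2}+\frac{y^{2n}+y^{-2n}}{n v^2}\right)+\left(\frac{2}{n+1}-\frac{2}{n}\right)\frac{1}{v^2},
\]
so the lemma reduces to showing that the last term has zero constant term. Here the proposal has a genuine gap: you never prove this, and the convention you commit to makes it false. You mention in passing that some formal expansion might give $\CT_x v^{-2}=0$, but you do not identify or use it. Instead you choose the annulus containing $|x|=1$. There
\[
\CT_x v^{-2}=\frac{1}{2\pi}\int_0^{2\pi}\frac{d\theta}{(4+2\cos\theta)^2}=\frac{1}{6\sqrt3}\neq 0,
\]
and with $d_n$ read that way the stated identity fails: for $n=1$ the right-hand side becomes $-8+\frac{1}{6\sqrt3}$.

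The missing idea is the convention the paper uses throughout this section. A rational function of $x$ is expanded as a formal Laurent series in $x$, that is, about $x=0$. Then $\CT_x v^{-2}=\CT_x\frac{x^2}{(x^2+4x+1)^2}=0$, because the expansion begins at $x^2$. Similarly $\CT_x v^{-1}=\CT_x\frac{x}{x^2+4x+1}=0$. This map is linear and agrees with the ordinary constant term on Laurent polynomials. Applying it to your decomposition gives $d_0=0$, and the lemma follows at once. Redefining $d_n$ is not an option: Lemma~\ref{dngfunc} computes these same $d_n$ under $\CT_x v^{-1}=\CT_x v^{-2}=0$. Its generating function $t^2+4t^3+14t^4+\cdots$ has no $t^0$ term, so $d_0=0$ is already built in.

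Two smaller errors would also have derailed your proposed test.
\begin{itemize}
\item $\CT_x(4+x+x^{-1})^2=16+2=18$, so $\CT_xR_1(v)=1-9=-8$, not $-10$. With $d_2=1$ and $d_4=14$ this agrees with $-(d_4/2+d_2)=-8$, as the lemma predicts.
\item Your divisibility remark is backwards. For even $k$, $y^k-y^{-k}$ is divisible by $y^2-y^{-2}=(y-y^{-1})v$, so $(y^k-y^{-k})^2$ is divisible by $v^2$. In fact $(2-V_k(v^2))/v^2$ is a polynomial in $v$ for every $k$, since $V_k=2$ at $z=0$. That is why $R_n(v)$ is a Laurent polynomial, and why the choice of convention only matters once you split off the constant $2$.
\end{itemize}
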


\begin{proof}
Putting $z=v$ in Lemma \ref{Rn(z)}, we have
$$
2-v^2=2-(y+y^{-1})^2=-(y^2+y^{-2}),
$$
and then the two roots of $t^2-(2-z^2)t+1=0$ are $\alpha=-y^2$ and $\beta=-y^{-2}$. Thus 
$$
V_k(v^2)=(-1)^k(y^{2k}+y^{-2k}),
$$
and
$$
R_n(v)=\f{1}{v^2}\l(\f{2-(-1)^{n+1}(y^{2n+2}+y^{-2n-2})}{n+1}-\f{2-(-1)^n(y^{2n}+y^{-2n})}{n}\r).
$$
Noting that
$$
\CT_x\f{1}{v^2}=\CT_x\f{x^2}{(x^2+4x+1)^2}=0,
$$
we arrive at
$$
\CT_xR_n(v)=(-1)^n\l(\CT_x\f{y^{2n+2}+y^{-2n-2}}{(n+1)v^2}+\CT_x\f{y^{2n}+y^{-2n}}{nv^2}\r)=(-1)^n\l(\f{d_{2n+2}}{n+1}+\f{d_{2n}}{n}\r)
$$
as desired.
\end{proof}

Now we establish the generating function of $d_n$.

\begin{lemma}\label{dngfunc}
We have
$$
\sum_{n=0}^{\infty}d_nt^n=\f{t^2(1+t)}{(1+t^2)^2\sqrt{1-6t+t^2}}.
$$
\end{lemma}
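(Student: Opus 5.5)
The plan is to form the generating function of the numerators $y^n+y^{-n}$ first, then split off the negative powers of $v$ by partial fractions, and evaluate the remaining piece with Lemma \ref{CTid}.

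Summing the two geometric series gives
$$
\sum_{n=0}^\infty (y^n+y^{-n})t^n=\f{1}{1-yt}+\f{1}{1-y^{-1}t}=\f{2-vt}{1-vt+t^2}.
$$
Each coefficient of $t^n$ on the right is a polynomial in $v$, as it must be. Hence
$$
\sum_{n\ge0}d_nt^n=\CT_x\f{2-vt}{v^2(1-vt+t^2)},
$$
where the constant term is taken coefficientwise in $t$. Each coefficient of $t^n$ has the form (polynomial in $v$)$/v^2$, i.e.\ a polynomial in $v$ plus multiples of $v^{-1}$ and $v^{-2}$.

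Next, write $s=1+t^2$ and perform a partial fraction decomposition in $v$ over the field of power series in $t$:
$$
\f{2-vt}{v^2(s-vt)}=\f{2}{s}\cdot\f1{v^2}+\f{t(1-t^2)}{s^2}\cdot\f1v+\f{t^2(1-t^2)}{s^2}\cdot\f{1}{s-vt}.
$$
This is a routine check: $\f{2-vt}{s-vt}-\f2s=\f{vt(2-s)}{s(s-vt)}$, followed by $\f{1}{v(s-vt)}=\f1{sv}+\f{t}{s(s-vt)}$. Comparing coefficients of $t^n$, this identity splits each numerator polynomial over $v^2$ into a $v^{-2}$ part, a $v^{-1}$ part, and a polynomial in $v$ (from the expansion of $1/(s-vt)$ in powers of $t$). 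So the constant term can be taken termwise.

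With the same convention as in the proof of Lemma \ref{CTxR}, where $1/v^2=x^2/(x^2+4x+1)^2$ is expanded as a power series in $x$, we have $\CT_x v^{-2}=0$. Likewise $\CT_x v^{-1}=\CT_x x/(1+4x+x^2)=0$. For the last term, write $s-vt=(1-4t+t^2)-t(x+x^{-1})$ and apply Lemma \ref{CTid} with $\alpha(t)=1-4t+t^2$ and $\beta(t)=-t$:
$$
\CT_x\f1{s-vt}=\f1{\sqrt{(1-4t+t^2)^2-4t^2}}=\f1{\sqrt{(1-2t+t^2)(1-6t+t^2)}}=\f{1}{(1-t)\sqrt{1-6t+t^2}}.
$$
Multiplying by $t^2(1-t^2)/(1+t^2)^2$ then yields the claimed $\f{t^2(1+t)}{(1+t^2)^2\sqrt{1-6t+t^2}}$.

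The main obstacle is justifying that the constant terms of $v^{-1}$ and $v^{-2}$ vanish consistently with the definition of $d_n$. This requires fixing the Laurent-series convention for $1/v$ as in Lemma \ref{CTxR}, namely expansion in nonnegative powers of $x$. It also requires that the termwise splitting of each $t^n$-coefficient is legitimate, which holds because the decomposition is an exact identity for each coefficient.
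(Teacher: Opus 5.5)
Your proposal is correct and is essentially the paper's own proof. It uses the same generating function $\frac{2-vt}{1-vt+t^2}$, the same three-term partial fraction decomposition, the vanishing of $\CT_x v^{-1}$ and $\CT_x v^{-2}$, and Lemma \ref{CTid} with $\alpha(t)=1-4t+t^2$, $\beta(t)=-t$. Your added remarks on the expansion convention and on taking constant terms coefficientwise in $t$ are accurate, and they spell out details the paper leaves implicit.
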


\begin{proof}
Since $y+y^{-1}=v$,
$$
\sum_{n=0}^{\infty}(y^n+y^{-n})t^n=\f{1}{1-yt}+\f{1}{1-y^{-1}t}=\f{2-(y+y^{-1})t}{1-(y+y^{-1})t+t^2}=\f{2-vt}{1-vt+t^2}.
$$
Therefore, via the partial fraction decomposition,
\begin{align*}
\sum_{n=0}^{\infty}d_nt^n&=\CT_x\f{2-vt}{v^2(1-vt+t^2)}\\
&=\CT_x\f{2}{(t^2+1)v^2}-\CT_x\f{t(t^2-1)}{(t^2+1)^2v}+\CT_x\f{t^2(t^2-1)}{(t^2+1)^2(tv-(t^2+1))}.
\end{align*}
Note that $\CT_xv^{-1}=\CT_xv^{-2}=0$. Then we have
$$
\sum_{n=0}^{\infty}d_nt^n=\CT_x\f{t^2(t^2-1)}{(t^2+1)^2(tv-(t^2+1))}=\f{t^2(1-t^2)}{(1+t^2)^2}\CT_x\f{1}{1+t^2-4t-t(x+x^{-1})}.
$$
In view of Lemma \ref{CTid}, we obtain the desired result.
\end{proof}

The following lemmas are devoted to determining $d_{p+1}+d_{p-1}\pmod{p}$ and $d_{p+1}-d_{p-1}\pmod{p^2}$.

\begin{lemma}\label{tNf-rev}
Let $N$ be a positive integer and let $f(t)$ be a integer-coefficient polynomial in $t$ with $\deg f(t)\leq N+1$. Then
$$
[t^N]\f{f(t)}{1+t^2}=\begin{cases}\f{(-1)^{\lfloor N/2\rfloor}}{2}\l(f(i)+f(-i)\r),\quad&2\mid N,\\[5pt] \f{(-1)^{\lfloor N/2\rfloor}}{2i}\l(f(i)-f(-i)\r),\quad&2\nmid N.\end{cases}
$$
\end{lemma}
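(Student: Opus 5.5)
The plan is a partial-fraction decomposition of $f(t)/(1+t^2)$ over $\Q[i]$, followed by coefficient extraction. Divide $f(t)$ by $1+t^2$ to get
\[
f(t)=(1+t^2)g(t)+r_0+r_1t,
\]
where $g$ is a polynomial with $\deg g\le N-1$ (because $\deg f\le N+1$) and $r_0,r_1$ are constants. Then
\[
\f{f(t)}{1+t^2}=g(t)+\f{r_0+r_1t}{1+t^2},
\]
and since $\deg g<N$ we get $[t^N]g(t)=0$. Hence $[t^N]$ of the left side equals $[t^N]$ of the remainder term. This degree condition is the only place the hypothesis $\deg f\le N+1$ enters, and it is what makes the argument work.

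Next, evaluate the division identity at $t=\pm i$, where $1+t^2$ vanishes. This gives $f(i)=r_0+r_1 i$ and $f(-i)=r_0-r_1i$, so
\[
r_0=\f{f(i)+f(-i)}{2},\qquad r_1=\f{f(i)-f(-i)}{2i}.
\]

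Finally, expand $1/(1+t^2)=\sum_{k\ge0}(-1)^kt^{2k}$. The coefficient of $t^N$ in $(r_0+r_1t)/(1+t^2)$ is:
\begin{itemize}
\item $r_0(-1)^{N/2}$ when $N$ is even, since only the $r_0$ term contributes;
\item $r_1(-1)^{(N-1)/2}$ when $N$ is odd.
\end{itemize}
In both cases the sign is $(-1)^{\lfloor N/2\rfloor}$, which gives exactly the two cases of the statement.

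There is no real obstacle here; the one point to check is that the polynomial part $g$ cannot reach degree $N$, which the bound $\deg f\le N+1$ guarantees.
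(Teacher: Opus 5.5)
Your proposal is correct and follows essentially the same route as the paper. Like the paper, you divide $f(t)$ by $1+t^2$, note that the quotient has degree at most $N-1$, recover the remainder coefficients by evaluating at $t=\pm i$, and read off $[t^N]$ from the expansion of $(r_0+r_1t)/(1+t^2)$.
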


\begin{proof}
Divide $f(t)$ by $1+t^2$. The quotient has degree at most $N-1$, so it contributes
nothing to the coefficient of $t^N$.  Write the remainder as $qt+r$. 

If $N$ is even, write $N=2n$. Then
\[
[t^N]\frac{qt+r}{1+t^2}=[t^{2n}]\sum_{k=0}^{\infty}(-1)^k t^{2k}(qt+r)=[t^{2n}]\sum_{k=0}^{\infty}(-1)^kr t^{2k}=(-1)^nr.
\]
On the other hand, since $f(\pm i)=\pm qi+r$, we have
\[
r=\frac{f(i)+f(-i)}2.
\]
Therefore,
\[
[t^N]\frac{qt+r}{1+t^2}=\f{(-1)^n}{2}\l(f(i)+f(-i)\r).
\]

If $N$ is odd, write $N=2n+1$. Then
\[
[t^N]\frac{qt+r}{1+t^2}=[t^{2n+1}]\sum_{k=0}^{\infty}(-1)^k t^{2k}(qt+r)=[t^{2n+1}]\sum_{k=0}^{\infty}(-1)^kq t^{2k+1}=(-1)^nq.
\]
Since
$$
q=\frac{f(i)-f(-i)}{2i},
$$
we have
\[
[t^N]\frac{qt+r}{1+t^2}=\f{(-1)^n}{2i}\l(f(i)-f(-i)\r).
\]

\end{proof}

\begin{lemma}\label{lem:dplus}
For each prime $p>3$,
\[
[t^{p-1}]\frac{1+t}{1+t^2}\frac1{\sqrt{1-6t+t^2}}\equiv \l(\f{3}{p}\r)\pmod p.
\]
\end{lemma}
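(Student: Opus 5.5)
The plan is to replace the algebraic series $(1-6t+t^2)^{-1/2}$ by a polynomial modulo $p$, at least in degrees $\le p-1$, and then evaluate the coefficient exactly with Lemma \ref{tNf-rev}.

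\emph{Reduction to a polynomial.} By \eqref{generatingfunc}, $(1-6t+t^2)^{-1/2}=\sum_n P_n(3)t^n$. Also $P_n(3)=\sum_j\binom nj\binom{n+j}{j}$ by \eqref{legendre}, so this series has integer coefficients, and all the manipulations below take place in $\Z[[t]]$ modulo $p$. Write $m=(p-1)/2$ and
$$
(1-6t+t^2)^{-1/2}=(1-6t+t^2)^{m}\,(1-6t+t^2)^{-p/2}.
$$
Modulo $p$ we have $(1-6t+t^2)^p\equiv 1-6t^p+t^{2p}$. Hence $(1-6t+t^2)^{-p/2}$ is the formal square root of $(1-6t^p+t^{2p})^{-1}$ modulo $p$, so it is $\equiv 1\pmod{(p,t^p)}$. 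It follows that
$$
[t^{p-1}]\frac{1+t}{1+t^2}\frac{1}{\sqrt{1-6t+t^2}}\equiv [t^{p-1}]\frac{f(t)}{1+t^2}\pmod p,\qquad f(t)=(1+t)(1-6t+t^2)^{m}.
$$

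\emph{Applying Lemma \ref{tNf-rev}.} Here $f$ has integer coefficients and $\deg f=p=N+1$ with $N=p-1$ even, so $\lfloor N/2\rfloor=m$ and the lemma gives the exact value
$$
[t^{p-1}]\frac{f(t)}{1+t^2}=\frac{(-1)^m}{2}\bigl(f(i)+f(-i)\bigr).
$$
Since $1-6i+i^2=-6i$ and $1+6i+i^2=6i$, we get $f(i)=(1+i)(-6i)^m$ and $f(-i)=(1-i)(6i)^m$. Adding them,
$$
f(i)+f(-i)=(6i)^m\bigl((-1)^m(1+i)+(1-i)\bigr).
$$

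\emph{Case split on the parity of $m$.}
\begin{itemize}
\item If $m$ is even, the bracket equals $2$. The coefficient is then $6^m i^m=6^m(-1)^{m/2}$.
\item If $m$ is odd, the bracket equals $-2i$. The coefficient is then $\frac{(-1)^m}{2}\cdot(-2i)(6i)^m=6^m i^{m+1}=6^m(-1)^{(m+1)/2}$.
\end{itemize}
In both cases the coefficient is $6^m\varepsilon_p$ with $\varepsilon_p=(-1)^{\lfloor (m+1)/2\rfloor}$.

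\emph{Conclusion.} By Euler's criterion $6^m\equiv(\frac6p)=(\frac2p)(\frac3p)\pmod p$. It therefore remains to check that $\varepsilon_p=(\frac2p)$, using $(\frac2p)=(-1)^{(p^2-1)/8}$. Checking the four residues of $p$ modulo $8$:
\begin{itemize}
\item $p\equiv1\pmod 8$: $m\equiv0\pmod4$, so $\varepsilon_p=1$;
\item $p\equiv3\pmod 8$: $m\equiv1\pmod4$, so $\varepsilon_p=-1$;
\item $p\equiv5\pmod 8$: $m\equiv2\pmod4$, so $\varepsilon_p=-1$;
\item $p\equiv7\pmod 8$: $m\equiv3\pmod4$, so $\varepsilon_p=1$.
\end{itemize}
This matches $(\frac2p)$ in every case, so the coefficient is $\equiv(\frac2p)^2(\frac3p)=(\frac3p)\pmod p$.

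The only step needing real care is the first reduction. One must justify that the truncation of $(1-6t+t^2)^{-1/2}$ below degree $p$ agrees with that of $(1-6t+t^2)^m$ modulo $p$. This rests on the integrality of $P_n(3)$ and on $(1-6t^p+t^{2p})^{-1/2}\equiv1\pmod{(p,t^p)}$, which holds because that series involves only powers $t^{pk}$ with $p$-integral coefficients, $p$ being odd. Everything after that is exact arithmetic.
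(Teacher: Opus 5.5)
Your proof is correct and follows essentially the same route as the paper. Both reduce $(1-6t+t^2)^{-1/2}$ to $(1-6t+t^2)^{(p-1)/2}$ modulo $p$ in degrees below $p$, apply Lemma \ref{tNf-rev} to $f(t)=(1+t)(1-6t+t^2)^{(p-1)/2}$, and evaluate at $t=\pm i$. The differences are cosmetic: the paper justifies the reduction termwise via $\binom{-1/2}{k}\equiv\binom{(p-1)/2}{k}\pmod p$ for $0\le k\le p-1$ rather than your factorization with $(1-6t+t^2)^{-p/2}\equiv 1$, and it settles the final sign by cases on $p \bmod 4$ rather than $p\bmod 8$.
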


\begin{proof}
It is easy to see that
\begin{align*}
[t^{p-1}]\frac{1+t}{1+t^2}\frac1{\sqrt{1-6t+t^2}}&=[t^{p-1}]\frac{1+t}{1+t^2}\sum_{k=0}^{\infty}\binom{-1/2}{k}(-6t+t^2)^k\\
&=[t^{p-1}]\frac{1+t}{1+t^2}\sum_{k=0}^{p-1}\binom{-1/2}{k}(-6t+t^2)^k\\
&\eq [t^{p-1}]\frac{1+t}{1+t^2}\sum_{k=0}^{(p-1)/2}\binom{(p-1)/2}{k}(-6t+t^2)^k\\
&=[t^{p-1}]\frac{(1+t)(1-6t+t^2)^{(p-1)/2}}{1+t^2}\pmod{p}.
\end{align*}
Note that $\deg((1+t)(1-6t+t^2)^{(p-1)/2})=p$. Then, by Lemma \ref{tNf-rev},
\begin{align*}
[t^{p-1}]\frac{1+t}{1+t^2}\frac1{\sqrt{1-6t+t^2}}&\eq\frac{(-1)^{(p-1)/2}}{2}\left((1+i)(-6i)^{(p-1)/2}+(1-i)(6i)^{(p-1)/2}\right)\\
&\eq \begin{cases}(-1)^{(p-1)/4+(p^2-1)/8}\l(\f{3}{p}\r)\pmod{p},\quad& p\eq1\pmod4,\\ (-1)^{(p+1)/4+(p^2-1)/8}\l(\f{3}{p}\r)\pmod{p},\quad& p\eq3\pmod4.\end{cases}
\end{align*}
Moreover, one can directly verify that when $p\eq1\pmod4$, $(p-1)/4+(p^2-1)/8\eq0\pmod2$ and when $p\eq3\pmod4$, $(p+1)/4+(p^2-1)/8\eq0\pmod2$.

The proof of Lemma \ref{lem:dplus} is now complete.
\end{proof}

\begin{lemma}\label{lem:dminuskey}
For each prime $p>3$,
\[
[t^{p-1}]\frac{1-t}{(1+t^2)\sqrt{1-6t+t^2}}\equiv \left(\frac{-3}{p}\right)\pmod{p^2}.
\]
\end{lemma}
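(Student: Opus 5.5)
The plan is to turn the generating function into an exact derivative. Then the coefficient of $t^{p-1}$ carries an explicit factor $p$, and a modulus $p^2$ statement reduces to a modulus $p$ computation. Write $S=\sqrt{1-6t+t^2}$ and set
$$
g(t)=\f{S}{\sqrt3\,(1+t)}.
$$
First I will verify directly that $3(1+t)^2+S^2=4(1+t^2)$, and that
$$
g'(t)=\f{4(t-1)}{\sqrt3\,S(1+t)^2}.
$$
From these it follows that
$$
\f{d}{dt}\arctan g(t)=\f{g'}{1+g^2}=\sqrt3\,\f{t-1}{(1+t^2)S}.
$$
Hence $\f{1-t}{(1+t^2)S}=-\f1{\sqrt3}\f{d}{dt}\arctan g(t)$. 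Next I will replace $\arctan$ by logarithms. Using $(\sqrt3(1+t)+iS)(\sqrt3(1+t)-iS)=4(1+t^2)$, one gets
$$
\arctan g=\f1{2i}\Big(2\log\big(\sqrt3(1+t)+iS\big)-\log(1+t^2)-\log 4\Big).
$$
Normalize $h(t)=(\sqrt3(1+t)+iS)/(\sqrt3+i)$, so that $h(0)=1$ and $h\in\mathbb Z_p[\zeta_{12}][[t]]$, which is unramified since $p>3$.

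Since $[t^{p-1}]F'=p\,[t^p]F$ and $p$ is odd, the term $\log(1+t^2)$ contributes nothing, because it has no odd powers. Therefore
$$
[t^{p-1}]\f{1-t}{(1+t^2)S}=-\f{1}{i\sqrt3}\,p\,[t^p]\log h(t).
$$
It then suffices to determine $p[t^p]\log h$ modulo $p^2$. The natural tool is the Dieudonn\'e--Dwork criterion. With $\sigma$ the Frobenius of the unramified extension, $\log\big(h(t)^p/h^{\sigma}(t^p)\big)\in p\,\mathbb Z_p[\zeta_{12}][[t]]$. This gives $p[t^p]\log h\equiv[t]\log h^\sigma=\sigma(h'(0))\pmod p$. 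Here $h'(0)=(\sqrt3-3i)/(\sqrt3+i)=-i\sqrt3$, so $\sigma(h'(0))=-\big(\f{-3}{p}\big)i\sqrt3$. This already yields the congruence modulo $p$, with right-hand side $\big(\f{-3}{p}\big)$.

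The main obstacle is the extra power of $p$. The Dwork congruence only controls $p[t^p]\log h$ modulo $p$, while I need it modulo $p^2$. I would first try to exploit the special algebraic shape of $h$. The branch points of $S$ are $3\pm2\sqrt2$, and $\sqrt3(1+t)\pm iS$ behaves like a quadratic-irrational ``unit'' (its norm is $4(1+t^2)$). So $h$ should be expressible, after a rational change of variable $t=\phi(w)$, as a ratio of products of linear factors $(1-\gamma w)$. For each such factor, $[w^p]\log(1-\gamma w)=-\gamma^p/p$ is exact, so the $p$-divisibility, and hence the required precision, can be tracked term by term. If this parametrization proves awkward, my fallback is to expand $1/S=\sum_k P_k(3)t^k$ and write the target as $\sum_{k=0}^{p-1}\varepsilon_{p-1-k}P_k(3)$, where $\varepsilon_j$ are the period-$4$ coefficients $1,-1,-1,1,\dots$ of $(1-t)/(1+t^2)$. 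I would then use \eqref{legendre} with $(x-1)/2=1$ and swap sums as in Lemma \ref{legendrecon}, reducing to sums $\sum_j\binom{2j}{j}\,c_j$. The inner coefficients $c_j$ should carry a factor $p/(2j+1)$ modulo $p^2$, as in that lemma. The resulting harmonic-type sums would then be evaluated via \eqref{generaltrans}, \eqref{keykey} and Granville's congruence \eqref{Granvilleeq}, exactly as in the proof of Lemma \ref{corsumkey}, and the $q_p(3)$ terms should cancel against the expansion of $(-3)^{(p-1)/2}$ via \eqref{vcon}.
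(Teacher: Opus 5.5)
\textbf{There is a genuine gap: your argument proves the lemma only modulo $p$.}

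Your first half is correct, and it is the paper's starting point in closed form. The paper's antiderivative is $g\bigl(\frac{2t}{(1+t)^2}\bigr)$ with $g(z)=\sum_{k\ge0}\frac{z^{k+1}}{k+1}\sum_{l\le k}\binom{2l}{l}$. This equals $\frac{2}{\sqrt3}\bigl(\frac{\pi}{6}-\arctan\frac{S}{\sqrt3(1+t)}\bigr)$, so your identity $\frac{1-t}{(1+t^2)S}=-\frac{1}{\sqrt3}\frac{d}{dt}\arctan\frac{S}{\sqrt3(1+t)}$ is exactly \eqref{dminuskey-key}. Your passage to $-\frac{1}{i\sqrt3}\,p\,[t^p]\log h$ and the Dieudonn\'e--Dwork step are also correct, but they give only the congruence modulo $p$.

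The lemma is stated modulo $p^2$, and that is precisely the step you leave open. The identity $[t^{p-1}]F'=p\,[t^p]F$ does not by itself reduce the problem to a computation modulo $p$, because $[t^p]\log h$ has $p$ in its denominator. You need a representation in which that denominator is explicit. The paper gets one from the series of $g$:
\begin{itemize}
\item In $\frac p2[t^p]g\bigl(\frac{2t}{(1+t)^2}\bigr)$, only the term $k=p-1$ carries a $1/p$. It contributes $2^{p-1}\sum_{l=0}^{p-1}\binom{2l}{l}$, which is needed modulo $p^2$.
\item The terms with $k\le (p-3)/2$ vanish modulo $p^2$, since $p\mid\binom{-2(k+1)}{p-k-1}$.
\item The terms with $(p-1)/2\le k\le p-2$ contribute $-p\sum_{l<p}\binom{2l}{l}\sum_{k=1}^{(p-1)/2}\frac{\binom{2k}{k}}{k2^k}$ modulo $p^2$.
\end{itemize}
The two Sun--Tauraso congruences then finish the proof. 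Your proposal has no substitute for these inputs.

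Neither continuation you sketch works as written.

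\emph{First route.} A substitution $t=\phi(w)$ does not turn $[t^p]$ into $[w^p]$, so the exact formula $[w^p]\log(1-\gamma w)=-\gamma^p/p$ is not the relevant one. Take the natural parametrization $t=\frac{w}{(1+w)(2+w)}$, $S=\frac{2-w^2}{(1+w)(2+w)}$. Then $h=\frac{(1-\gamma_1w)(1-\gamma_2w)}{(1+w)(1+w/2)}$ with $2\gamma_{1,2}=(\sqrt3\mp1)\zeta_{12}^5$, where $\zeta_{12}=(\sqrt3+i)/2$. But you then need Lagrange inversion:
\[
p\,[t^p]\log h=[w^{p-1}]\frac{d\log h}{dw}\bigl((1+w)(2+w)\bigr)^p .
\]
Reducing this modulo $p^2$ leaves $p$-th powers such as $(\sqrt3\pm1)^p$ and $(1\pm i)^p$. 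Their residues modulo $p^2$ involve Fermat-quotient-type quantities attached to $2$ and to the unit $2+\sqrt3$, which Frobenius does not control. Showing that these cancel is the real content of the lemma, and you do not address it.

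\emph{Fallback route.} The key premise is false. Your inner coefficients are
\[
c_j=\sum_{k=j}^{p-1}\varepsilon_{p-1-k}\binom{k+j}{2j}=[t^{p-1-j}]\frac{1}{(1+t^2)(1-t)^{2j}}.
\]
So $c_0=(-1)^{(p-1)/2}$ is a unit, and for $p=5$ one gets $(c_0,\dots,c_4)=(1,2,9,6,1)$. In Lemma \ref{legendrecon} the factor $p$ came from Chu's identity producing $\binom{p+j}{2j+1}$. The weight $\frac{1-t}{1+t^2}$ destroys this, so there is no reduction to the sums of Lemma \ref{corsumkey}.
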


\begin{proof}
Let 
$$
g(z)=\sum_{k=0}^{\infty}\f{z^{k+1}}{k+1}\sum_{l=0}^{k}\binom{2l}{l}.
$$
We first show that
\begin{equation}\label{dminuskey-key}
\f{1}{2}\f{\d}{\d t}g\l(\f{2t}{(1+t)^2}\r)=\f{1-t}{(1+t^2)\sqrt{1-6t+t^2}}.
\end{equation}
In fact, the left-hand side of \eqref{dminuskey-key} equals
\begin{align*}
\f{1}{2}g'\l(\f{2t}{(1+t)^2}\r)\f{\d}{\d t}\l(\f{2t}{(1+t)^2}\r).
\end{align*}
Note that
\begin{align*}
g'(z)&=\sum_{k=0}^{\infty}z^k\sum_{l=0}^k\binom{2l}{l}=\sum_{l=0}^{\infty}\binom{2l}{l}\sum_{k=l}^{\infty}z^k\\
&=\f{1}{1-z}\sum_{l=0}^{\infty}\binom{2l}{l}z^l=\f{1}{(1-z)\sqrt{1-4z}}
\end{align*}
and
$$
\f{\d}{\d t}\l(\f{2t}{(1+t)^2}\r)=\f{2(1-t)}{(1+t)^3}.
$$
Therefore,
\begin{align*}
\f{1}{2}g'\l(\f{2t}{(1+t)^2}\r)\f{\d}{\d t}\l(\f{2t}{(1+t)^2}\r)=\f12\f{1}{(1-\f{2t}{(1+t)^2})\sqrt{1-\f{8t}{(1+t)^2}}}\f{2(1-t)}{(1+t)^3}=\f{1-t}{(1+t^2)\sqrt{1-6t+t^2}},
\end{align*}
which proves \eqref{dminuskey-key}.

Now, by \eqref{dminuskey-key},
\begin{align*}
[t^{p-1}]\frac{1-t}{(1+t^2)\sqrt{1-6t+t^2}}&=\f12[t^{p-1}]\f{\d}{\d t}g\l(\f{2t}{(1+t)^2}\r)\\
&=\f{p}{2}[t^p]g\l(\f{2t}{(1+t)^2}\r)\\
&=\f{p}{2}[t^p]\sum_{k=0}^{\infty}\f{1}{k+1}\f{2^{k+1}t^{k+1}}{(1+t)^{2(k+1)}}\sum_{l=0}^{k}\binom{2l}{l}\\
&=\f{p}{2}[t^p]\sum_{k=0}^{\infty}\f{2^{k+1}}{k+1}\sum_{j=0}^{\infty}\binom{-2(k+1)}{j}t^{j+k+1}\sum_{l=0}^{k}\binom{2l}{l}\\
&=\f{p}{2}\sum_{k=0}^{p-1}\f{2^{k+1}}{k+1}\binom{-2(k+1)}{p-k-1}\sum_{l=0}^k\binom{2l}{l}.
\end{align*}
Observe that for $0\leq k\leq (p-3)/2$,
$$
\binom{-2(k+1)}{p-k-1}\eq\binom{p-2k-2}{p-k-1}\eq0\pmod{p},
$$
and then
$$
\f{p}{2}\sum_{k=0}^{(p-3)/2}\f{2^{k+1}}{k+1}\binom{-2(k+1)}{p-k-1}\sum_{l=0}^k\binom{2l}{l}\eq0\pmod{p^2}.
$$
Therefore,
\begin{align*}
[t^{p-1}]\frac{1-t}{(1+t^2)\sqrt{1-6t+t^2}}&\eq\f{p}{2}\sum_{k=(p-1)/2}^{p-2}\f{2^{k+1}}{k+1}\binom{-2(k+1)}{p-k-1}\sum_{l=0}^k\binom{2l}{l}+2^{p-1}\sum_{l=0}^{p-1}\binom{2l}{l}\\
&=\f{p}{2}\sum_{k=1}^{(p-1)/2}\f{2^{p-k}}{p-k}\binom{-2(p-k)}{k}\sum_{l=0}^{p-1-k}\binom{2l}{l}+2^{p-1}\sum_{l=0}^{p-1}\binom{2l}{l}\\
&\eq 2^{p-1}\sum_{l=0}^{p-1}\binom{2l}{l}-p\sum_{l=0}^{p-1}\binom{2l}{l}\sum_{k=1}^{(p-1)/2}\f{1}{k2^k}\binom{2k}{k}\pmod{p^2}.
\end{align*}
Recall that in 2010--2011, Sun and Tauraso \cite{ST2010,ST2011} showed that for any odd prime $p$,
\begin{align*}
\sum_{l=0}^{p-1}\binom{2l}{l}&\eq\l(\f{p}{3}\r)\pmod{p^2},\\
\sum_{k=1}^{(p-1)/2}\f{1}{k2^k}\binom{2k}{k}&\eq q_p(2)\pmod{p}.
\end{align*}
Hence we have
$$
[t^{p-1}]\frac{1-t}{(1+t^2)\sqrt{1-6t+t^2}}\eq \l(\f{p}{3}\r)\l(2^{p-1}-pq_p(2)\r)=\l(\f{p}{3}\r)\pmod{p^2}.
$$
Then the desired result follows from the fact
$$
\l(\f{p}{3}\r)\l(\f{-3}{p}\r)=(-1)^{(p-1)/2}(-1)^{(p-1)/2}=1.
$$
This completes the proof.
\end{proof}

\begin{lemma}\label{lem:dminus}
For each prime $p>3$,
\[
[t^{p-1}]\frac{(1+t)(1-t^2)}{(1+t^2)^2}\frac1{\sqrt{1-6t+t^2}}\equiv\frac23\l(\f{-3}{p}\r)+p\left(\l(\f{3}{p}\r)-\frac23\right)\pmod {p^2}.
\]
\end{lemma}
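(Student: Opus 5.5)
The plan is to reuse the substitution behind Lemma \ref{lem:dminuskey}. Put $w=2t/(1+t)^2$. Then
$$
1-w=\frac{1+t^2}{(1+t)^2},\qquad \frac{(1+t)(1-t^2)}{(1+t^2)^2}=\frac{1-t}{1+t^2}\cdot\frac{1}{1-w}.
$$
By \eqref{dminuskey-key} the whole series equals $\frac12\,g'(w)\,w'(t)/(1-w)$. So if I set
$$
h(z)=\sum_{k\ge0}\frac{c_k}{k+1}z^{k+1},\qquad c_k=\sum_{l=0}^k(k-l+1)\binom{2l}{l},
$$
then $h'(z)=\frac{1}{(1-z)^2\sqrt{1-4z}}=\frac{g'(z)}{1-z}$, and the series in the lemma is $\frac12\frac{\d}{\d t}h(w)$. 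This gives
$$
[t^{p-1}]\frac{(1+t)(1-t^2)}{(1+t^2)^2\sqrt{1-6t+t^2}}=\frac p2\sum_{k=0}^{p-1}\frac{2^{k+1}}{k+1}\binom{-2(k+1)}{p-k-1}c_k .
$$

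I then split the sum exactly as in the proof of Lemma \ref{lem:dminuskey}. For $0\le k\le (p-3)/2$ the binomial $\binom{-2(k+1)}{p-k-1}$ is divisible by $p$, so these terms vanish modulo $p^2$. The term $k=p-1$ contributes $2^{p-1}c_{p-1}$. For the middle range, substitute $k\to p-k$ with $1\le k\le (p-1)/2$ and use $\binom{-2(p-k)}{k}\equiv\binom{-2k}{k}=(-1)^k\binom{3k-1}{k}\pmod p$. This reduces the middle range to $p$ times a sum that only needs to be known modulo $p$. In that inner sum, $c_{p-1-k}$ can be reduced modulo $p$ using $\binom{2l}{l}\equiv0\pmod p$ for $(p-1)/2<l<p$.

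For the boundary term, $c_{p-1}=p\sum_{l<p}\binom{2l}{l}-\sum_{l<p}l\binom{2l}{l}$. The first piece is handled by the Sun--Tauraso congruence $\sum_{l<p}\binom{2l}{l}\equiv(\frac p3)\pmod{p^2}$. The second needs $\sum_{l=0}^{p-1}l\binom{2l}{l}\pmod{p^2}$. I would get it from the telescoping identity $(l+1)\binom{2l+2}{l+1}=2(2l+1)\binom{2l}{l}$, which expresses $\sum l\binom{2l}{l}$ through $\sum\binom{2l}{l}$ and $p\binom{2p}{p}$. I expect the result to be $-\frac23(\frac p3)$ up to a correction of order $p$. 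Since $(\frac p3)=(\frac{-3}p)$, this should produce the main term $\frac23(\frac{-3}{p})$.

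The main obstacle is the middle sum modulo $p$, which must produce the remaining $p\bigl((\frac3p)-\frac23\bigr)$ together with the Fermat-quotient corrections coming from $2^{p-1}$. If the direct evaluation via Sun--Tauraso type sums such as $\sum_{k\le(p-1)/2}\binom{2k}{k}/(k2^k)\equiv q_p(2)$ becomes unwieldy, I would use a cross-check. Write $(1+t)(1-t^2)=(1-t)(1+t^2)+2t(1-t)$, so that the target equals the quantity in Lemma \ref{lem:dminuskey} plus $2[t^{p-2}]\frac{1-t}{(1+t^2)^2\sqrt{1-6t+t^2}}$. Then I would compute the residue modulo $p$ directly, as in Lemma \ref{lem:dplus}: truncate $(1-6t+t^2)^{-1/2}$ to $(1-6t+t^2)^{(p-1)/2}$ and extend Lemma \ref{tNf-rev} to denominator $(1+t^2)^2$ by evaluating $f$ and $f'$ at $\pm i$. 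This pins down the order-$p$ coefficient needed to close the congruence modulo $p^2$.
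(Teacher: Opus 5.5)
\textbf{Verdict: the proposal has a genuine gap, although the main route is repairable.} Your reduction is set up correctly. With $w=2t/(1+t)^2$ and $h'(z)=g'(z)/(1-z)$, the coefficient equals $\frac p2\sum_{k=0}^{p-1}\frac{2^{k+1}}{k+1}\binom{-2(k+1)}{p-k-1}c_k$, and the terms with $k\le (p-3)/2$ vanish modulo $p^2$. The boundary term $2^{p-1}c_{p-1}$, together with $3\sum_{l<p}l\binom{2l}{l}=p\binom{2p}{p}-2\sum_{l<p}\binom{2l}{l}$, does give the main term $\frac23\left(\frac{-3}{p}\right)$.

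The gap is that the order-$p$ term $p\left(\left(\frac3p\right)-\frac23\right)$ is never derived, and this term is the whole modulo-$p^2$ content of the lemma. The one concrete step you give toward it is false. Since $-2(p-k)=2k-2p\equiv 2k\pmod p$ and $k<p$, one has $\binom{-2(p-k)}{k}\equiv\binom{2k}{k}\pmod p$, not $\binom{-2k}{k}=(-1)^k\binom{3k-1}{k}$; for $k=1$ these are $2$ and $-2$. With $\binom{3k-1}{k}$ you would be left with sums that the cited Sun--Tauraso congruences do not cover, and which give the wrong residue (already for $p=17$).

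Your fallback cannot fill the gap either. The extra term $2[t^{p-2}]\frac{1-t}{(1+t^2)^2\sqrt{1-6t+t^2}}$ is $\equiv-\frac13\left(\frac{-3}{p}\right)\not\equiv0\pmod p$. So evaluating it modulo $p$, by truncating to $(1-6t+t^2)^{(p-1)/2}$, only reproves the lemma modulo $p$ and says nothing about its order-$p$ digit. What makes a modulo-$p$ computation sufficient in the paper is splitting off an exact derivative:
\[
\frac{(1+t)(1-t^2)}{(1+t^2)^2\sqrt{1-6t+t^2}}=\frac23\,\frac{1-t}{(1+t^2)\sqrt{1-6t+t^2}}-\frac16\,\frac{\d}{\d t}\left(\frac{(1+t)\sqrt{1-6t+t^2}}{1+t^2}\right).
\]
The derivative's $[t^{p-1}]$-coefficient is $p\,[t^p]\frac{(1+t)\sqrt{1-6t+t^2}}{1+t^2}$, so only this coefficient has to be known modulo $p$.

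That said, your main route does close once the binomial is corrected, and without much effort.
\begin{itemize}
\item For $1\le k\le m=(p-1)/2$, the term of index $p-1-k$ is $\equiv -p\binom{2k}{k}c_{p-1-k}/(k2^k)\pmod{p^2}$.
\item You have $c_{p-1-k}\equiv-\sum_{l\le m}(k+l)\binom{2l}{l}\equiv\left(\frac p3\right)\left(\frac23-k\right)\pmod p$.
\item Use $\sum_{k=1}^{m}\binom{2k}{k}/(k2^k)\equiv q_p(2)$ and $\sum_{k=0}^{m}\binom{2k}{k}/2^k\equiv\sum_k\binom mk(-2)^k=(-1)^m\pmod p$. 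Then the middle range contributes $-p\left(\frac p3\right)\left(\frac23q_p(2)+1-(-1)^m\right)$.
\item The boundary term satisfies $2^{p-1}c_{p-1}\equiv(1+pq_p(2))\left(\frac23\left(\frac p3\right)+p\left(\frac p3\right)-\frac{2p}{3}\right)\pmod{p^2}$.
\end{itemize}
The $q_p(2)$ terms cancel, and the total is $\frac23\left(\frac p3\right)-\frac{2p}{3}+p(-1)^m\left(\frac p3\right)$. This is the claim, since $(-1)^m\left(\frac p3\right)=\left(\frac3p\right)$.

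Compared with the paper, this repaired argument redoes the bookkeeping of Lemma \ref{lem:dminuskey} with the heavier weights $c_k$ and one extra elementary sum, but it needs no guessed identity. The paper's derivative splitting instead reuses Lemma \ref{lem:dminuskey} verbatim and pushes everything else into a single modulo-$p$ evaluation via Lemma \ref{tNf-rev}.
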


\begin{proof}
We can directly verify that
$$
\frac{(1+t)(1-t^2)}{(1+t^2)^2\sqrt{1-6t+t^2}}=\frac23\frac{1-t}{(1+t^2)\sqrt{1-6t+t^2}}-\frac16\frac{\d}{\d t}\left(\frac{(1+t)\sqrt{1-6t+t^2}}{1+t^2}\right).
$$
It follows that
\begin{equation}\label{dminus-key}
[t^{p-1}]\frac{(1+t)(1-t^2)}{(1+t^2)^2\sqrt{1-6t+t^2}}=\frac23[t^{p-1}]\frac{1-t}{(1+t^2)\sqrt{1-6t+t^2}}-\frac{p}{6}[t^p]\frac{(1+t)\sqrt{1-6t+t^2}}{1+t^2}.
\end{equation}

Clearly,
\begin{align}\label{dminus-2parts}
[t^p]\frac{(1+t)\sqrt{1-6t+t^2}}{1+t^2}&=[t^p]\frac{(1+t)}{1+t^2}\sum_{k=0}^{p}\binom{1/2}{k}(-6t+t^2)^k\notag\\
&=-\binom{1/2}{p}6^p+[t^p]\frac{(1+t)}{1+t^2}\sum_{k=0}^{p-1}\binom{1/2}{k}(-6t+t^2)^k.
\end{align}
By Lucas' congruence,
$$
\binom{1/2}{p}6^p\eq\f{1/2}{1/2-p}\binom{-1/2}{p}6\eq 6\f{\binom{2p}{p}}{(-4)^p}\eq -3\pmod{p}.
$$
Meanwhile,
\begin{align*}
[t^p]\frac{(1+t)}{1+t^2}\sum_{k=0}^{p-1}\binom{1/2}{k}(-6t+t^2)^k&\eq[t^p]\frac{(1+t)}{1+t^2}\sum_{k=0}^{(p+1)/2}\binom{(p+1)/2}{k}(-6t+t^2)^k\\
&=[t^p]\frac{(1+t)(1-6t+t^2)^{(p+1)/2}}{1+t^2}\pmod{p}.
\end{align*}
Write 
$$
(1+t)(1-6t+t^2)^{(p+1)/2}=(1+t^2)h(t)+qt+r,
$$
where $h(t)$ is a monic integer-coefficient polynomial with $\deg h(t)=p$ and $q,r\in\Z$. Then, by Lemma \ref{tNf-rev}, we have
\begin{align*}
[t^p]\frac{(1+t)(1-6t+t^2)^{(p+1)/2}}{1+t^2}&=[t^p]h(t)+[t^p]\f{qt+r}{1+t^2}=1+[t^p]\f{qt+r}{1+t^2}=1+(-1)^{(p-1)/2}q\\
&=1+(-1)^{(p-1)/2}\f{(1+i)(-6i)^{(p+1)/2}-(1-i)(6i)^{(p+1)/2}}{2i}\\
&=\begin{cases}1-6^{(p+1)/2}(-1)^{(p-1)/4},\quad& p\eq1\pmod4,\\[5pt] 1-6^{(p+1)/2}(-1)^{(p+1)/4},\quad& p\eq3\pmod4\end{cases}\\
&\eq 1-6\l(\f{3}{p}\r)\pmod{p}.
\end{align*}
Substituting these into \eqref{dminus-2parts}, we have
$$
[t^p]\frac{(1+t)\sqrt{1-6t+t^2}}{1+t^2}\eq4-6\l(\f{3}{p}\r)\pmod{p}.
$$
Combining this, Lemma \ref{lem:dminuskey} and \eqref{dminus-key}, we finally arrive at
\begin{align*}
[t^{p-1}]\frac{(1+t)(1-t^2)}{(1+t^2)^2\sqrt{1-6t+t^2}}&\eq\f23\l(\f{-3}{p}\r)-\f{p}{6}\l(4-6\l(\f{3}{p}\r)\r)\\
&=\frac23\l(\f{-3}{p}\r)+p\left(\l(\f{3}{p}\r)-\frac23\right)\pmod {p^2},
\end{align*}
as desired.
\end{proof}

\medskip

\noindent{\it Proof of Theorem \ref{cat-main}}. In view of \eqref{binomialcon},
\begin{align*}
\sum_{k=0}^{(p-1)/2}\frac{\binom{2k}{k}}{(k+1)16^k}T_{2k}(4,1)&\eq \CT_x\sum_{k=0}^{m}\f{(-1)^k}{k+1}\binom{m+k}{2k}(4+x+x^{-1})^{2k}\\
&=\CT_xR_m(v)\pmod{p^2}.
\end{align*}
By Lemma \ref{CTxR}, we have
$$
\CT_xR_m(v)=(-1)^m\l(\f{d_{2m+2}}{m+1}+\f{d_{2m}}{m}\r)\eq 2(-1)^m\l((d_{p+1}-d_{p-1})-p(d_{p+1}+d_{p-1})\r)\pmod{p^2}.
$$
With the help of Lemmas \ref{dngfunc}, \ref{lem:dplus} and \ref{lem:dminus}, we obtain
\begin{align*}
d_{p+1}+d_{p-1}=[t^{p-1}]\f{1+t}{(1+t^2)\sqrt{1-6t+t^2}}&\eq\l(\f{3}{p}\r)\pmod{p},\\
d_{p+1}-d_{p-1}=[t^{p-1}]\f{(1-t^2)(1+t)}{(1+t^2)^2\sqrt{1-6t+t^2}}&\eq\frac23\l(\f{-3}{p}\r)+p\left(\l(\f{3}{p}\r)-\frac23\right)\pmod {p^2}.
\end{align*}
Combining the above, we finally arrive at
$$
\sum_{k=0}^{(p-1)/2}\frac{\binom{2k}{k}}{(k+1)16^k}T_{2k}(4,1)\eq\frac43\left(\l(\frac3p\r)-p\l(\frac{-1}{p}\r)\right)\pmod {p^2}.
$$

The proof of Theorem \ref{cat-main} is now complete.\qed

\section{Proof of Theorem \ref{triple-main}}\label{sec:triple}

In this section, we use $\CT_{x,y}F(x,y)$ to denote the constant term of a Laurent series or rational function $F(x,y)$.

The following result is a congruence analogue of Lemma \ref{CTid}.
\begin{lemma}\label{CTid-analog}
For any odd prime $p$, let $\alpha(t)$ and $\beta(t)$ be polynomials in $t$ with coefficients over $\Z_p$ such that $\alpha(0)\not\eq0\pmod{p}$. Then
\[
\operatorname{CT}_x(\alpha(t)+\beta(t)(x+x^{-1}))^{p-1}\eq(\alpha(t)^2-4\beta(t)^2)^m\pmod{p}.
\]
\end{lemma}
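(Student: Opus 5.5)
We expand the power by the binomial theorem and reduce each binomial coefficient modulo $p$; this gives a finite identity to compare with the expansion of the right-hand side. Write $m=(p-1)/2$. We have
\[
(\alpha+\beta(x+x^{-1}))^{p-1}=\sum_{k=0}^{p-1}\binom{p-1}{k}\alpha^{p-1-k}\beta^k(x+x^{-1})^k .
\]
By \eqref{centralbinomial}, only even $k=2j$ with $0\le j\le m$ contribute, and the constant term of $(x+x^{-1})^{2j}$ is $\binom{2j}{j}$. Hence
\[
\CT_x(\alpha+\beta(x+x^{-1}))^{p-1}=\sum_{j=0}^{m}\binom{p-1}{2j}\binom{2j}{j}\alpha^{p-1-2j}\beta^{2j}.
\]
Since $\binom{p-1}{2j}\equiv 1\pmod p$, this is congruent modulo $p$ to
\[
\sum_{j=0}^{m}\binom{2j}{j}\alpha^{2(m-j)}\beta^{2j}.
\]

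Next we expand the right-hand side of the statement:
\[
(\alpha^2-4\beta^2)^m=\sum_{j=0}^m\binom{m}{j}(-4)^j\alpha^{2(m-j)}\beta^{2j}.
\]
So it suffices to prove $\binom{2j}{j}\equiv\binom{m}{j}(-4)^j\pmod p$ for $0\le j\le m$. This is the standard congruence
\[
\binom{m}{j}=\prod_{r=1}^{j}\frac{(p-1)/2-r+1}{r}\equiv\prod_{r=1}^{j}\frac{-(2r-1)}{2r}=\frac{(-1)^j(2j-1)!!}{2^jj!}=\frac{\binom{2j}{j}}{(-4)^j}\pmod p.
\]
It also follows from \eqref{binomialcon} combined with $\binom{m+j}{2j}\equiv\binom{m}{j}\cdot(\text{suitable factor})$, but the direct product computation is cleaner. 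All quantities are polynomials in $t$ with $p$-integral coefficients, so the congruences hold coefficientwise in $\Z_p[t]$.

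The main (mild) obstacle is bookkeeping: the hypothesis $\alpha(0)\not\equiv0$ is not actually needed for this finite polynomial identity, since no inversion of $\alpha$ occurs. The only real check is the binomial congruence above, which involves no division by $p$ because $r\le j\le m<p$.
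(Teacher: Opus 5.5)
Your proof is correct and follows essentially the same route as the paper: binomial expansion, discarding odd powers via \eqref{centralbinomial}, and the congruence $\binom{p-1}{2k}\binom{2k}{k}\equiv\binom{m}{k}(-4)^k\pmod p$, which you verify explicitly while the paper uses it without comment. The only difference is that you compare directly with the expansion of $(\alpha^2-4\beta^2)^m$, whereas the paper factors out $\alpha^{p-1}$ and divides by $\alpha^2$; this is why you rightly observe that the hypothesis $\alpha(0)\not\equiv0\pmod p$ is not actually needed.
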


\begin{proof}
Clearly, by \eqref{centralbinomial},
\begin{align*}
\CT_x(\alpha(t)+\beta(t)(x+x^{-1}))^{p-1}&=\CT_x\sum_{k=0}^{p-1}\binom{p-1}{k}\beta(t)^k(x+x^{-1})^k\alpha(t)^{p-1-k}\\
&=\sum_{k=0}^{m}\binom{p-1}{2k}\binom{2k}{k}\beta(t)^{2k}\alpha(t)^{p-1-2k}\\
&\eq\alpha(t)^{p-1}\sum_{k=0}^m\binom{m}{k}\l(\f{-4\beta(t)^2}{\alpha(t)^2}\r)^k\\
&=\alpha(t)^{p-1}\l(1-\f{4\beta(t)^2}{\alpha(t)^2}\r)^m\\
&=(\alpha(t)^2-4\beta(t)^2)^m\pmod{p}.
\end{align*}
This concludes the proof.
\end{proof}

\begin{lemma}\label{coeff}
For any odd prime $p$, let $f(t)=\sum_{k=0}^{\infty}a_kt^k$ with $a_k\in\Z_p$. Then
\begin{align*}
[t^{p-1}](t-1)^{p-1}f(t)&\eq \sum_{k=0}^{p-1}a_k\pmod{p},\\
[t^{2p-1}](t-1)^{p-1}f(t)&\eq \sum_{k=p}^{2p-1}a_k\pmod{p}.
\end{align*}
\end{lemma}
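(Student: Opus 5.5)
The plan is to reduce $(t-1)^{p-1}$ modulo $p$ to a simple geometric form and then read off coefficients. The key fact is that for $0\le j\le p-1$,
$$\binom{p-1}{j}\equiv(-1)^j\pmod p.$$
Hence
$$(t-1)^{p-1}=\sum_{j=0}^{p-1}\binom{p-1}{j}t^j(-1)^{p-1-j}\equiv\sum_{j=0}^{p-1}t^j\pmod p,$$
using that $p-1$ is even, so $(-1)^{p-1-j}=(-1)^j$ and the signs cancel. All coefficients $a_k$ lie in $\Z_p$, so multiplying this congruence of polynomials by the power series $f(t)$ preserves it coefficientwise in $\Z_p[[t]]$.

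For the first congruence:
$$[t^{p-1}]\Big(\sum_{j=0}^{p-1}t^j\Big)f(t)=\sum_{j=0}^{p-1}a_{p-1-j}=\sum_{k=0}^{p-1}a_k.$$

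For the second congruence:
$$[t^{2p-1}]\Big(\sum_{j=0}^{p-1}t^j\Big)f(t)=\sum_{j=0}^{p-1}a_{2p-1-j}=\sum_{k=p}^{2p-1}a_k.$$

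There is no real obstacle here. The only point needing care is that the congruence modulo $p$ of the polynomial $(t-1)^{p-1}$ transfers to each coefficient of the product with $f$. This holds because every coefficient of the product is a finite $\Z_p$-linear combination of the $a_k$, with coefficients that are binomial coefficients.
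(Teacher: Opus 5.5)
Your proof is correct and follows the paper's approach: both reduce $(t-1)^{p-1}$ modulo $p$ to $\sum_{j=0}^{p-1}t^j$ and then read off the coefficients of $t^{p-1}$ and $t^{2p-1}$. The only difference is the justification of that reduction. You use $\binom{p-1}{j}\equiv(-1)^j\pmod p$, whereas the paper writes $(t-1)^{p-1}=(t-1)^p/(t-1)\equiv(t^p-1)/(t-1)$; your version avoids the (harmless) step of dividing a congruence by $t-1$.
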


\begin{proof}
Note that
$$
(t-1)^{p-1}=\f{(t-1)^p}{t-1}\eq \f{t^p-1}{t-1}=\sum_{k=0}^{p-1}t^k\pmod{p}.
$$
Therefore,
\begin{align*}
[t^{p-1}](t-1)^{p-1}f(t)&\eq [t^{p-1}]\sum_{k=0}^{p-1}t^k\sum_{l=0}^{p-1}a_lt^l\\
&=\sum_{k=0}^{p-1}a_{p-1-k}\\
&=\sum_{k=0}^{p-1}a_{k}\pmod{p}.
\end{align*}
The second congruence can be deduced similarly.
\end{proof}

\noindent{\it Proof of Theorem \ref{triple-main}}. Note that 
$$
[y^n]\l(1+\f{y^3}{2}\r)^{n}=\begin{cases}\binom{3k}{k}/2^k,\quad&n=3k,\\[5pt] 0,\quad&\t{otherwise}\end{cases}
$$
and
$$
\binom{3k}{k}=\f{(3k)(3k-1)\cdots(2k+1)}{k!}\eq0\pmod{p}
$$
for $2p/3<k\leq p-1$. Therefore,
\begin{align}\label{triple-twoparts}
\sum_{k=0}^{p-1}\f{\binom{3k}{k}}{432^k}T_{3k}(6,1)&\eq\sum_{0\leq 3k<2p/3}\f{\binom{3k}{k}}{2^k}\f{T_{3k}(6,1)}{6^{3k}}\notag\\
&=\sum_{0\leq 3k<2p}[y^{3k}]\l(1+\f{y^3}{2}\r)^{3k}\f{T_{3k}(6,1)}{6^{3k}}\notag\\
&=\sum_{l=0}^{2p-1}[y^l]\l(1+\f{y^3}{2}\r)^{l}\f{T_l(6,1)}{6^l}\notag\\
&=\sigma_1+\sigma_2\pmod{p},
\end{align}
where
\begin{align*}
\sigma_1&=\sum_{l=0}^{p-1}[y^l]\l(1+\f{y^3}{2}\r)^{l}\f{T_l(6,1)}{6^l},\\
\sigma_2&=\sum_{l=0}^{p-1}[y^{p+l}]\l(1+\f{y^3}{2}\r)^{p+l}\f{T_{p+l}(6,1)}{6^{p+l}}.
\end{align*}

We first consider $\sigma_1\pmod{p}$. Clearly,
\begin{align}\label{sigma1reduce}
\sigma_1&=\CT_{x,y}\sum_{l=0}^{p-1}\l(\l(y^{-1}+\f{y^2}{2}\r)\l(\f{6+x+x^{-1}}{6}\r)\r)^l\notag\\
&=\CT_{x,y}\f{1-\l(\l(y^{-1}+\f{y^2}{2}\r)\l(\f{6+x+x^{-1}}{6}\r)\r)^p}{1-\l(y^{-1}+\f{y^2}{2}\r)\l(\f{6+x+x^{-1}}{6}\r)}\notag\\
&\eq [y^{p-1}]\CT_{x}\l(y-\l(1+\f{y^3}{2}\r)\l(\f{6+x+x^{-1}}{6}\r)\r)^{p-1}\notag\\
&\eq [y^{p-1}]\CT_x((12y-12-6y^3)-(2+y^3)(x+x^{-1}))^{p-1}\pmod{p}.
\end{align}
In view of Lemmas \ref{CTid-analog} and \ref{coeff}, we get
\begin{align}\label{sigma1eval}
\sigma_1&\eq [y^{p-1}]\l((12y-12-6y^3)^2-4(2+y^3)^2\r)^m\notag\\
&=[y^{p-1}]\l(16 (-1 + y)^2 (2 + y) (4 - 3 y + 2 y^3)\r)^m\notag\\
&\eq\sum_{k=0}^{p-1}c_k\pmod{p},
\end{align}
where 
$$c_k=[y^k](2 + y)^m (4 - 3 y + 2 y^3)^m.$$

Now we evaluate $\sigma_2\pmod{p}$. It is easy to see that
\begin{align*}
T_{p+l}(6,1)&=[x^{p+l}](x^2+6x+1)^{p+l}\\
&\eq [x^{p+l}](x^{2p}+6^px^p+1)(x^2+6x+1)^l\\
&=6^pT_l(6,1)\pmod{p}.
\end{align*}
Meanwhile, 
$$
[y^{p+l}]\l(1+\f{y^3}{2}\r)^{p+l}\eq [y^{p+l}]\l(1+\f{y^{3p}}{2^p}\r)\l(1+\f{y^3}{2}\r)^{l}=[y^{p+l}]\l(1+\f{y^3}{2}\r)^{l}\pmod{p}.
$$
Hence, via similar arguments as in \eqref{sigma1reduce} and \eqref{sigma1eval}, we have
\begin{align}\label{sigma2eval}
\sigma_2&\eq \sum_{l=0}^{p-1}[y^{p+l}]\l(1+\f{y^3}{2}\r)^{l}\f{T_l(6,1)}{6^l}\notag\\
&=[y^p]\CT_{x}\sum_{l=0}^{p-1}\l(\l(y^{-1}+\f{y^2}{2}\r)\l(\f{6+x+x^{-1}}{6}\r)\r)^l\notag\\
&\eq [y^{2p-1}](y-1)^{p-1}(y+2)^m(2y^3-3y+4)^m\notag\\
&\eq \sum_{k=p}^{2p-1}c_k\pmod{p}.
\end{align}

Substituting \eqref{sigma1eval} and \eqref{sigma2eval} into \eqref{triple-twoparts}, we have
$$
\sum_{k=0}^{p-1}\f{\binom{3k}{k}}{432^k}T_{3k}(6,1)\eq \sum_{k=0}^{2p-1}c_k\pmod{p}.
$$
Since $\deg((y+2)^m(2y^3-3y+4)^m)=4m=2p-2$, $\sum_{k=0}^{2p-1}c_k=\sum_{k=0}^{2p-2}c_k$ is exactly the value of $(y+2)^m(2y^3-3y+4)^m$ at $y=1$. Therefore,
$$
\sum_{k=0}^{p-1}\f{\binom{3k}{k}}{432^k}T_{3k}(6,1)\eq 3^{m}\cdot 3^m=3^{p-1}\eq1\pmod{p}.
$$
The proof of Theorem \ref{triple-main} is now complete.\qed

\begin{Acks}
This work is supported by the National Natural Science Foundation of China (grant 12201301) and the College Students' Innovation Training Projects of Nanjing Forestry University (grant 202610298075Z). The authors would like to thank Dr. Yu-Chen Sun at the University of Bristol for valuable comments and suggestions concerning Theorems 1.2 and 1.3.
\end{Acks}

\end{document}